\newtheorem{theorem}{Theorem}[section]
\newtheorem{lemma}[theorem]{Lemma}
\theoremstyle{definition}
\theoremstyle{remark}
\newtheorem{remark}[theorem]{Remark}
\numberwithin{equation}{section}
\def\eps{\bm \epsilon}
\def\sig{\bm \sigma}
\def\bU{\boldsymbol U}
\def\bV{\boldsymbol V}
\def\bP{\boldsymbol P}
\def\bf{\boldsymbol f}
\def\bu{\boldsymbol u}
\def\bv{\boldsymbol v}
\def\bpsi{\boldsymbol \psi}
\def\bp{\boldsymbol p}
\def\bw{\boldsymbol w}
\def\bz{\boldsymbol z}
\def\bx{\boldsymbol x}
\def\bq{\boldsymbol q}
\def\bg{\boldsymbol g}
\def\be{\boldsymbol e}
\def\eu{\boldsymbol e_{\bu}}
\def\ev{\boldsymbol e_{\bv}}
\def\ep{\boldsymbol e_{\bp}}
\def\divv{\text{div}}
\def\Divv{\text{Div}}
\def\divn{\text{div }}
\def\Divu{\text{\underline{Div}}}
\def\A{\Lambda}
\def\M{M}
\def\R{A}
\def\S{S}
\definecolor{viol}{rgb}{0.75,0.15,0.95}
\definecolor{orange}{HTML}{9400D3}
\definecolor{vv}{rgb}{0.6,0.1,0.8}
\definecolor{pinkred}{rgb}{0.0,0.0,0.0}
\definecolor{pinkredd}{rgb}{0.0,0.0,0.0}
\definecolor{olive}{HTML}{6B8E23}
\definecolor{c}{HTML}{2E8B57}
\definecolor{spink}{rgb}{0.98,0.855,0.867}
\begin{document}

\title[Parameter-robust Uzawa-type iterative methods for double saddle point problems]
{Parameter-robust Uzawa-type iterative methods for double saddle point problems arising 
in Biot's consolidation and multiple-network poroelasticity models}


\author{Q. Hong}
\address{}
\curraddr{}
\thanks{}

\author{J. Kraus}
\address{}
\curraddr{}
\thanks{}

\author{M. Lymbery}
\address{}
\curraddr{}
\thanks{}

\author{F. Philo}
\address{}
\curraddr{}
\thanks{}

\subjclass[2010]{65M12, 65M60, 65F10, 65N22, 35Q92}

\date{}

\dedicatory{}

\begin{abstract}

This work is concerned with the iterative solution of systems of quasi-static multiple-network
poroelasticity (MPET) equations describing flow in elastic porous media that is permeated by
single or multiple fluid networks. Here, the focus is on a three-field formulation of the problem
in which the displacement field of the elastic matrix and, additionally, one velocity field and
one pressure field for each of the $n \ge 1$ fluid networks are the unknown physical quantities.
Generalizing Biot's model of consolidation, which is obtained for $n=1$, the MPET equations
for $n\ge1$ exhibit a double saddle point structure. 

The proposed approach is based on a framework of augmenting and splitting this three-by-three
block system in such a way that the resulting block Gauss-Seidel preconditioner defines a fully
decoupled iterative scheme for the flux-, pressure-, and displacement fields.
In this manner, one obtains an augmented Lagrangian Uzawa-type method, the analysis of which
is the main contribution of this work. The parameter-robust uniform linear convergence of this
fixed-point iteration is proved by showing that its rate of contraction is strictly less than one
independent of all physical and discretization parameters.

The theoretical results are confirmed by a series of numerical tests that compare the new fully
decoupled scheme to the very popular partially decoupled fixed-stress split iterative method, 
which decouples
only flow--the flux and pressure fields remain coupled in this case--from the mechanics problem.
We further test the performance of the block triangular preconditioner defining the new scheme
when used to accelerate the GMRES algorithm.

\end{abstract}

\maketitle

\section{Introduction}

In this paper we propose and analyze stationary iterative methods for solving the equations of multiple network
poroelastic theory (MPET) which describe flow in deformable porous media. The latter is modeled as an elastic
solid matrix comprising $n \ge 1$ superimposed fluid networks with possibly vastly varying characteristic length
scales and hydraulic conductivities, see e.g.,~\cite{Vardakis2019fluid} and the references therein.

Dual-porosity/dual-permeability models have been proposed and studied in a geomechanical context, see,
e.g.~\cite{Barenblatt1960basic,Bai_etal1993multi}, providing a generalization of Biot's consolidation model
which is obtained for $n=1$, see~\cite{Biot1941general,Biot1955theory}.
Over the last decade, the MPET equations have gradually gained attention as a tool for modeling flow across
scales and networks in soft tissue.~Biological multicompartmental poroelasticity models can be used to embed
more specific medical models, e.g., to describe water transport in the cerebral environment and explore the
pathogenesis of acute and chronic hydrocephalus~\cite{TullyVentikos2011cerebral}, or to study effects of obstructing
cerebrospinal fluid (CSF) transport and to demonstrate the impact of aqueductal stenosis and fourth ventricle
outlet obstruction (FVOO)~\cite{Vardakis2013exploring,Vardakis2016investigating}, or to find medical indications
of oedema formation~\cite{Chou2016afully}.

Recently, the MPET model has also been used in order to gain a better understanding of the
processes involved with the mechanisms behind Alzheimer's disease (AD), the most common form
of dementia~\cite{Guo_etal2018subject-specific}.
Most prominently, the so-called amyloid hypothesis states that the accumulation of neurotoxic amyloid-$\beta$
(A$\beta$) into parenchymal senile plaques or within the walls of arteries is a basic cause of this disease.
In ~\cite{Guo2019on}, a partial validation of a four-network poroelastic model for metabolic waste clearance
is presented in a qualitative way, i.e., by showing a qualitative agreement of the cerebral blood flow (CBF) data
obtained from arterial spin labeling (ASL) images and the corresponding model output for different regions of
the brain.
Although the authors of these papers conclude that there is a need for more experimental and clinical data to optimize the
boundary conditions and parameters used in numerical modeling, they also stress the potential of MPET
modeling as a testing bed for hypotheses and new theories in neuroscience research. 

Regarding the numerical solution of the MPET equations mainly two different approaches have been investigated
in the last couple of years. The first one has been proposed in~\cite{lee2018mixed} and uses a mixed finite element
formulation based on introducing an additional total pressure variable. Energy estimates for the continuous solutions
and a priori error estimates for a family of compatible semidiscretizations demonstrate that this formulation is robust
for nearly incompressible materials, small storage coefficients, and small or vanishing transfer between networks.

The second approach is based on a generalization of the classical three-field formulation of Biot's model and explicitly
accommodates Darcy's law for each fluid network.
This formulation enforces the exact conservation of mass at the price of including additionally $n$ vector fields
for the Darcy velocities (fluxes).~A parameter-robust stability analysis of this flux-based MPET model has been
presented in~\cite{Hong2018conservativeMPET} along with fully parameter-robust norm-equivalent preconditioners.
Following~\cite{HongKraus2017parameter,KanschatRiviere2017finite},
the authors propose in~\cite{Hong2018conservativeMPET} a family of strongly conservative locking-free
discretizations for the MPET model and establish the related optimal error estimates for the stationary
problems arising from implicit time discretization by the backward Euler method. These results also cover
the case of vanishing storage coefficients.

Various works can be found on discretizations and efficient iterative solvers and preconditioning techniques
for the quasi-static Biot model addressing two-field,
see, e.g.~\cite{boffi2016nonconforming, adler2017robust}, three-field,
see, e.g.,~\cite{oyarzua2016locking, Hu2017nonconforming, Lee2016parameter, HongKraus2017parameter},
and four-field formulations, see, e.g.,~\cite{Lee2016robust,Baerland2017weakly}.

Two of the most popular and likely most efficient iterative schemes for solving the equaions of
poroelasticity are the so-called undrained split and fixed-stress split iterative methods, which, contrary
to the drained split and the fixed-strain split methods, are unconditionally stable, see~\cite{Kim2011stability}.
The first convergence analysis of the former methods has been presented in~\cite{Mikelic2013convergence}
for the quasi-static Biot system.
Subsequent refined results focus mostly on variants of the fixed-stress method addressing multirate
fixed-stress split iterative schemes~\cite{Almani2016convergence}, fully discrete iterative coupling of
flow and geomechanics~\cite{almani2017convergence}, heterogenous media and linearized Biot's
equations~\cite{Both2017robust}, two-grid fixed-stress schemes for heterogeneous
media~\cite{dana2018convergence2}, or space-time finite element approximations of the quasi-static
Biot system~\cite{Bause2017space}. A strategy for optimizing the stabilization parameter in the
fixed-stress split iterative method for the Biot problem in two-field formulation has been presented
in~\cite{Storvik2018on}.

The fixed-stress method has also been recently successfully used in combination with Anderson acceleration
for the solution of non-linear poromechanics problems~\cite{Both2018Anderson}. Moreover, monolithic and splitting
based solution schemes have been considered and analyzed for solving quasi-static thermo-poroelasticity problems
with nonlinear convective transport~\cite{brun2019monolithic}. The latter work focuses on the analysis of fully and
partially decoupled schemes for heat, mechanics and flow applied to the linearized problem obtained via the so-called
$L$-scheme.~All previously mentioned works, in presence of flux and pressure unknowns, solve the flow equations
implicitely, i.e., as a coupled subsystem, a strategy which we will not pursue in this paper.

A desirable property of preconditioners, in addition to their uniformity with respect to discretization parameters, is
their robustness regarding potentially large variations of the physical parameters. This task can be studied
in the framework of operator preconditioning on the level of the continuous model,
cf.~\cite{Mardal2011preconditioning}. Targeting Biot's consolidation model the parameter-robustness of norm-equivalent preconditioners has
been established in~\cite{Lee2016parameter} for the total-pressure based formulation
and in~\cite{HongKraus2017parameter} for the classical three-field formulation based
on displacement, Darcy velocity and fluid pressure fields. Both approaches have been generalized to the
MPET model, see~\cite{lee2018mixed,Hong2018conservativeMPET}  

One potential advantage of the approach presented in~\cite{Hong2018conservativeMPET} is exact mass
conservation. A disadvantage, however, is that the presence of $n$ fluxes and $n$ associated pressures makes the system in
general more difficult and also more time-consuming to solve. 
The fixed-stress split iterative method has recently been generalized to be applicable not only to the Biot ($n=1$)
but also to the more general MPET ($n \ge 1$) systems in~\cite{Hong2018parameter-robust} which presents
a fully parameter-robust convergence analysis and determines a close to optimal acceleration parameter.

However, in the conservative approach obtained from generalizing the classical three-field formulation of Biot's model, 
the block of $n$ unknown fluxes (with $d$ components each) couples
to a block of $n$ pressure unknowns creating a subsystem with $n (d+1)$ scalar quantities of interest as compared
to the $(n (d+1) + d)$ unknown scalar functions in the whole system. Hence, considering the above-mentioned
four-network model ($n=4$) in three space dimensions ($d=3$), for example, this results in a flux-pressure
subsystem with approximately $16/19$ of the size of the whole system.~This explains why a further decoupling
of the flux from the pressure block of unknowns in an iterative method is of particular interest in this approach.

The goal of the present paper is to propose and analyze a class of fully decoupled iterative schemes, which
contrary to the fixed-stress split iterative method also decouple the {\it flux-pressure} subsystem. In this respect,
it can be seen as a continuation of the analysis presented in~\cite{Hong2018parameter-robust}.

As already mentioned, the target problem is a three-by-three block system with a double saddle point.
The abstract canonical form of the operator (matrix) of the related operator equation can be represented in the form
\begin{equation}\label{saddle-point_operator_canonical}
\begin{bmatrix}
A_1 & 0     &B_1^T\\
0     & A_2 &B_2^T\\
B_1 & B_2 & -C
\end{bmatrix}
\end{equation}
with $A_1$ and $A_2$ being symmetric positive definite (SPD) operators and $C$ a symmetric positive
semidefinite (SPSD) operator. The operator~\eqref{saddle-point_operator_canonical} defines a double saddle point
problem and can be rearranged in such a way that it has the form
\begin{equation}\label{saddle-point_operator_tridiag}
\begin{bmatrix}
A_1 & B_1^T & 0 \\
B_1 & -C &B_2\\
0 & B_2^T & A_2
\end{bmatrix}
\end{equation}
and thus fits the definition of a multiple saddle point operator as given in~\cite{sogn2019schur} where
block-diagonal Schur complement preconditioners for multiple saddle point problems of block tridiagonal
form are analyzed.
We will use a combined augmentation and splitting technique to construct in a block Gauss-Seidel framework
fully decoupled augmented Lagrangian Uzawa-type methods for linear systems with an operator (matrix) of the
canonical form~\eqref{saddle-point_operator_canonical}.
Although our methodical approach to construct preconditioners is similar to the one taken in the recent
works~\cite{Benzi2018iterative,Benzi2019UzawatypeAA}, see also~\cite{white2016block}, there are also
major differences. Firstly, the double saddle point problems considered
in~\cite{Benzi2018iterative,Benzi2019UzawatypeAA} are generated by operators of the canonical form
\begin{equation}\label{saddle-point_operator_Benzi}
\begin{bmatrix}
A_1 & B_1^T &B_2^T\\
B_1 & 0 & 0\\
B_2 & 0 & -C
\end{bmatrix}
\end{equation}
with $A_1$ being SPD and $C$ being SPSD. It can easily be seen that the
operators~\eqref{saddle-point_operator_canonical} and \eqref{saddle-point_operator_Benzi}
are of a different type in the sense that they can not be transferred one into the other by
permutations of rows and columns. The second main difference is that the analysis
in~\cite{Benzi2018iterative,Benzi2019UzawatypeAA} uses arguments from classical linear
algebra whereas our convergence proofs use techniques from functional analysis aiming at
quantitative bounds that might be useful when applying the proposed iterative methods at
the level of finite element approximations of the continuous problems.

The remainder of the paper is organized as follow: In Section~\ref{sec:Section_2}, we first
formulate the MPET problem, introduce the notation and transform the problem into a coupled
system with a double saddle point operator of the form~\eqref{saddle-point_operator_tridiag}.
Based on this notation we then recall the fixed-stress split iterative method in a block Gauss-Seidel
framework. It follows the construction of a new class of fully decoupled iterative Uzawa-type methods,
which requires an additional augmentation step. This section ends with summarizing some preliminary
and auxiliary results that are used in the convergence analysis of the new class of methods presented 
in Section~\ref{sec:convergence}. The numerical tests in Section~\ref{sec:Section_4} serve
the assessment of the performance of the iterative methods and preconditioners developed in this
paper comparing them also with the fixed-stress split iterative method analyzed
in~\cite{Hong2018parameter-robust}.

\section{Iterative coupling methods for the MPET problem}\label{sec:Section_2}

\subsection{The MPET system -  formulation and notation}
%
Consider the quasi-static MPET equations in a bounded Lipschitz domain $\Omega \subset \mathbb R^d$, $d=2,3$:
\begin{subequations}\label{eq:MPET}
\begin{align}
 \bv_i + K_i \nabla p_i &= \bm 0\;\; \text{in}~~ \Omega\times (0,T),~~ i=1,\ldots,n,
\label{MPET2} \\
-\alpha_i \divv \dot{\bu} - \divv \bv_i - c_{p_{i}} \dot{p}_{i} - \sum_{\substack{j=1\\j\neq i}}^{n}\beta_{ij} (p_i-p_j) &=g_i\;\;\text{in}~~ \Omega\times (0,T),~~  i=1,\ldots,n, \label{MPET3}\\
-\divv \sig + \sum_{i=1}^{n}\alpha_i \nabla p_i  &= \bf~~ \text{in}~~ \Omega\times (0,T).\label{MPET1}
\end{align}
\end{subequations}
The unknown physical quantities in this system are the displacement field $\bu$, the seepage velocities, or fluxes, $\bv_i$,
and the scalar pressure fileds $p_i$. 
The effective stress and strain tensors are given by
\begin{equation}
	\sig = 2\mu \eps(\bu) + \lambda \text{div}(\bu)\bm I \quad \text{and}\quad 
	\eps(\bu)  = \frac{1}{2}(\nabla \bu + (\nabla \bu)^T), \label{constitutive_compatibility}
\end{equation}
respectively with the Lam\'e parameters $\lambda$ and $\mu$ defined via the modulus of elasticity $E$ and the Poisson ratio $\nu\in[0,1/2)$ 
as follows: $$\lambda:=\frac{\nu E}{(1+\nu)(1-2\nu)}, \qquad\mu:=\frac{E}{2(1+\nu)}.$$

In~\eqref{eq:MPET}, $\alpha_i$ denote the Biot-Willis coefficients, 
$K_i$ the hydraulic conductivities, which in this paper for convenience only, are scalars defining the tensor coefficients $\bm K_i = K_i \bm I$,
$c_{p_{i}}$ the constrained specific storage coefficients.  
Considering the right-hand sides in~\eqref{MPET1} and~\eqref{MPET3},
$\bf$ denotes the body force density
whereas 
$g_i$ represent the fluid extractions or injections, see e.g.~\cite{Showalter2010poroelastic} and the references therein. 
The parameters $\beta_{ij}=\beta_{ji}$, $i\neq j$ couple the network pressures and are called network transfer coefficients. 

By substituting the expression for the stress tensor from~\eqref{constitutive_compatibility} in \eqref{MPET1} the MPET system takes the form: 
\begin{subequations}\label{eq:MPET:vpu}
\begin{align}
\bv_i +K_i \nabla p_i={\mathbf 0},\;\; ~~ i=1,\ldots,n,  
\label{MPET2vpu} \\
- \divv \bv_i - c_{p_{i}} \dot{p}_{i} - \sum_{\substack{j=1\\j\neq i}}^{n}\beta_{ij} (p_i-p_j) -\alpha_i \divv \dot{\bu} &=g_i,\;\;~~  i=1,\ldots,n,
\label{MPET3vpu}\\
 \sum_{i=1}^{n}\alpha_i \nabla p_i  -2\mu\divv \eps(\bu)-\lambda\nabla \divv \bu &= \bf.\label{MPET1vpu}
\end{align}
\end{subequations} 

After imposing proper boundary and initial conditions, see~\cite{Hong2018conservativeMPET}, 
and using the backward Euler method for 
time discretization,
one has to solve a static problem of the form 
\begin{subequations}\label{eq:1}
	\begin{align}
	 K^{-1}_i\bv_i ^k+  \nabla p_i^k &= {\mathbf 0}, \qquad i=1,\dots,n,  \label{eq:1b}\\
	-\alpha_i \divv\bu^k- \tau \divv\bv_i^k -c_{p_i}p_i^k-\tau \sum_{\substack{{j=1}\\j\neq i}}^{n}\beta_{ij}(p_i^k -p_j^k) &= g_i^k, \qquad i=1,\dots,n, \label{eq:1c}\\
	-2\mu\divv \eps(\bu^k)  - \lambda \nabla \divv \bu^k +\sum_{i=1}^{n}\alpha_i \nabla p_i^k &= \bf^k, \label{eq:1a}
	\end{align}
\end{subequations}
in each time step, i.e., at every time moment $t_k = t_{k-1}+\tau$, $k=1,2,\ldots$. Here,  
$\bu^k$, $\bv_i^k$, $p_i^k$ are approximations
of $\bu$,  $\bv_i$, $p_i$ at $t = t_k$ and
$\bf^k = \bf(x, t_k)$, ${g}^k_i = -\tau g_i(x, t_k) - \alpha_i \divv(\bu^{k-1}) - c_{p_i}p_i^{k-1}$ for $i=1,\ldots,n$.  
After  
dividing~\eqref{eq:1} by $2\mu$, denoting 
$$
\frac{\lambda}{2\mu}\rightarrow \lambda, \ \frac{\alpha_i}{2\mu}\rightarrow \alpha_{i},\ 
\frac{\bm f^k}{2\mu}\rightarrow \bm f^k, \ \frac{\tau}{2\mu}\rightarrow \tau, \ 
\frac{c_{p_i}}{2\mu}\rightarrow c_{p_i}, \ \frac{g_i^k}{2\mu} \rightarrow g_i^k,\quad i=1,\ldots,n,
$$
and further introducing the new variables  
$$
\bv_i:=\frac{\tau}{\alpha_i} \bv_i^k ,\quad p_i:=\alpha_i p_i^k  , \quad \bu:=\bu^k,\quad 
\bf:=\bf^k, \quad g_i:=\frac{g_i^k}{\alpha_i}  ,\quad i=1,\ldots,n,
$$
system~\eqref{eq:1} can be 
presented in the form
\begin{subequations}\label{eq:4}
	\begin{align}
	\tau^{-1} K^{-1}_i\alpha_i^2{\bv}_i + \nabla {p}_i &={\mathbf 0}, \qquad i=1,\dots,n,\\
	-\divv \bu - \divv{\bv}_i  -\frac{c_{p_i}}{\alpha_i^2} {p}_i + \sum_{\substack{{j=1}\\j\neq i}}^{n} \left(-\frac{\tau\beta_{ij}}{\alpha_i^2} {p}_i+\frac{\tau\beta_{ij}}{\alpha_i\alpha_j} {p}_j\right) 
	&= {g}_i,\qquad i=1,\dots,n, \\
	-\divv \eps(\bu) - \lambda \nabla \divv \bu +\sum_{i=1}^{n} \nabla {p}_i &= \bf,
	\end{align}
\end{subequations}
where we have also multiplied~\eqref{eq:1b} by $\alpha_i$ and \eqref{eq:1c} by $\alpha_i^{-1}$. 
In what follows we will also make use of the notation 
${\boldsymbol v}^T:=({\boldsymbol v}_1^T,\dots,{\boldsymbol v}_n^T)$, ${\boldsymbol z}^T:
=({\boldsymbol z}_1^T,\dots,{\boldsymbol z}_n^T)$, 
${\boldsymbol p}^T:=(p_1,\dots,p_n)$, ${\boldsymbol q}^T:=(q_1,\dots,q_n)$ where 
${\boldsymbol v}, {\boldsymbol z} \in {\boldsymbol V}={\boldsymbol V}_1\times \dots\times {\boldsymbol V}_n$,
${\boldsymbol p}, {\boldsymbol q} \in {\boldsymbol P}=P_1\times \dots\times P_n$ 
and ${\boldsymbol U} \hspace{-0.4ex}=\hspace{-0.4ex} \{ {\boldsymbol u} \in H^1(\Omega)^d:\hspace{-0.4ex} {\boldsymbol u} \hspace{-0.4ex}= {\boldsymbol 0}
\text{ on } \Gamma_{{\boldsymbol u},D} \},$
${\boldsymbol V}_i \hspace{-0.4ex}=\hspace{-0.4ex} \{ {\boldsymbol v}_i\in H({\text{div}},\Omega):
{\boldsymbol v}_i \cdot {\boldsymbol n} =0  \text{ on } \Gamma_{p_i,N} \}$, $P_i = L^2(\Omega)$, and $P_i = L^2_0(\Omega)$
if $\Gamma_{{\boldsymbol u},D} = \Gamma=\partial \Omega$.
Using the parameter substitutions 
$$R^{-1}_i := \tau^{-1}K_i^{-1}\alpha_i^2,\quad \alpha_{p_i} := 
\frac{c_{p_i}}{\alpha_i^2},\quad \beta_{ii} := \sum_{\substack{j=1\\j\neq i}}^{n}\beta_{ij},\quad
{\alpha}_{ij} := \frac{\tau \beta_{ij}}{\alpha_i \alpha_j},\quad \tilde\alpha_{ii}:=- \alpha_{p_i} - \alpha_{ii}$$ for $i,j=1,\dots,n,$
we further rewrite system~\eqref{eq:4} as

\begin{align}\label{MPET_strong}
{\mathcal A}\left(\begin{array}{c} \bv \\ \bp \\ \bu \end{array} \right) =
&\begin{bmatrix}
A_v&B^T_v&0\\
B_v&-C&B_u\\
0&B_u^T&A_u
\end{bmatrix}
\left(\begin{array}{c} \bv \\ \bp \\ \bu \end{array} \right)=\left(\begin{array}{c}\boldsymbol 0\\ \bg\\\bf\end{array} \right) 
\end{align}
where
\begin{align*}
&A_v:=\hspace{-1.ex}\begin{bmatrix}
R^{-1}_1I &     0      & \dots  &0         \\
  0       &     \ddots &     & \vdots       \\
  \vdots  &            &\ddots  & 0\\
   0      &  \dots     & 0      & R^{-1}_nI
\end{bmatrix}\hspace{-1.ex},\hspace{1.ex}
B_v:=\hspace{-1.ex}\begin{bmatrix}
-  \divv&0        &\dots   &  0 \\
0       &  \ddots &        & \vdots  \\
 \vdots &         & \ddots & 0 \\
   0    &     \dots &   0    &-  \divv 
\end{bmatrix}\hspace{-1.ex},\hspace{1.ex}
B_u:=\hspace{-1.ex}\begin{bmatrix}
- \divv  \\
\vdots \\
\vdots\\
- \divv
\end{bmatrix}\hspace{-1.ex},\hspace{1.ex}
-C:=\hspace{-1.ex}\begin{bmatrix}
\tilde\alpha_{11}I& \alpha_{12} I & \dots& \alpha_{1n}I\\
 \alpha_{21}I  & \ddots & & \alpha_{2n}I\\
 \vdots &  & \ddots & \vdots  \\
  \alpha_{n1}I  &  \alpha_{n2}I &   \dots   & \tilde\alpha_{nn} I \\ 
\end{bmatrix}\\
&\hbox{and} ~A_u:=-\divv \eps  - \lambda \nabla \divv.  
\end{align*}
%
For the scaled parameters, 
we make the rather non-restrictive assumptions
\begin{align}\label{parameter:range}
\lambda \ge 0,\quad  R^{-1}_1 ,\dots, R^{-1}_n > 0 ,\quad \alpha_{p_1},\dots,\alpha_{p_n} \ge 0,
\quad \alpha_{ij}\ge 0, ~~~i, j=1,\dots, n.
\end{align}

From now on, we will use the same symbols for denoting operators and their corresponding coefficient matrices.
%
Additionally, let us introduce
$$
\Lambda _{1}:=  
\begin{bmatrix}
\alpha_{11} & -\alpha_{12} & \dots &-\alpha_{1n}  \\
-\alpha_{21} & \alpha_{22} & \dots &-\alpha_{2n}  \\
\vdots & \vdots & \ddots & \vdots  \\
-\alpha_{n1} & -\alpha_{n2} & \dots &\alpha_{nn}  
\end{bmatrix},\quad
\Lambda _2:=
\begin{bmatrix}
{\alpha_{p_1}} &0&\dots &0\\
0&{\alpha_{p_2}}&\dots &0\\
\vdots&\vdots &\ddots&\vdots\\
0&0&\dots&  {\alpha_{p_n}}
\end{bmatrix},
$$
i.e. $C=\Lambda_1+\Lambda_2$. Further, denote
$R^{-1}:=\max\{R^{-1}_i:i=1,2,\cdots,n\}, \lambda_0:=\max\{1,\lambda\}$,  
\begin{align*}
\text{$\A _3$}&:=
\begin{bmatrix}
R &0&\dots &0\\
0&\ddots&{\ddots}&\vdots\\
\vdots&{\ddots}&\ddots&\vdots\\
0&\dots&0& R
\end{bmatrix}, \quad
\qquad \A _{{4}}:=
\begin{bmatrix}
\frac{1}{\lambda_0} &\dots& {\dots} & \frac{1}{\lambda_0}\\
\vdots& & &\vdots\\
\vdots& & &\vdots\\
\frac{1}{\lambda_0}&\dots & {\dots}&\frac{1}{\lambda_0}
\end{bmatrix},
\end{align*}
$$\Lambda:=\Lambda_1+\Lambda_2+\Lambda_3+\Lambda_4,$$
and also, for any block vector $\bz$ and vector $\bu$ 
$$
\Divv\bz:= \left(\begin{array}{c}\divv\bz_1 \\ \ \vdots \\ \divv\bz_n \end{array} \right),\quad
\Divu\bu:= \left(\begin{array}{c}\divv\bu \\ \ \vdots \\ \divv\bu \end{array} \right).
$$

\subsection{The fixed-stress split iterative method revisited}
For any operator $\Lambda_L:  {\boldsymbol P}\rightarrow  {\boldsymbol P}^*$, $\mathcal A$ can be decomposed as follows: 
\begin{align}
{\mathcal A}=
\begin{bmatrix}
A_v&B^T_v&0\\
B_v&-C-\Lambda_L&0\\
0&B_u^T&A_u
\end{bmatrix}
+
\begin{bmatrix}
0&0&0\\
0&\Lambda_L&B_u\\
0&0&0
\end{bmatrix}
\end{align}
Applying the block Gauss-Seidel method to the above system, we obtain 
\begin{align}
\begin{bmatrix}
A_v&B^T_v&0\\
B_v&-C-\Lambda_L&0\\
0&B_u^T&A_u
\end{bmatrix}
\left(\begin{array}{c} \bv^{k+1} \\ \bp^{k+1} \\ \bu^{k+1} \end{array} \right)
+
\begin{bmatrix}
0&0&0\\
0&\Lambda_L&B_u\\
0&0&0
\end{bmatrix}
\left(\begin{array}{c} \bv^{k} \\ \bp^{k} \\ \bu^{k} \end{array} \right)
=\left(\begin{array}{c}\boldsymbol 0\\ \bg\\\bf\end{array} \right)
\end{align}
or, equivalently,  
\begin{align}\label{fixed_stress_sys}
\begin{bmatrix}
A_v&B^T_v&0\\
B_v&-C-\Lambda_L&0\\
0&B_u^T&A_u
\end{bmatrix}
\left(\begin{array}{c} \bv^{k+1} \\ \bp^{k+1} \\ \bu^{k+1} \end{array} \right)
=\left(\begin{array}{c}\boldsymbol 0\\ \bg\\\bf\end{array} \right)
-
\begin{bmatrix}
0&0&0\\
0&\Lambda_L&B_u\\
0&0&0
\end{bmatrix}
\left(\begin{array}{c} \bv^{k} \\ \bp^{k} \\ \bu^{k} \end{array} \right)
\end{align}
which is (a block variant of) the fixed-stress method. 
In~\cite{Hong2018parameter-robust} a parameter-robust convergence analysis of this method has been presented for 
the choice 
\begin{align}
\Lambda_L=L 
\begin{bmatrix}
I &I & \dots& I\\
I & \ddots & &I\\
  \vdots &  & \ddots & \vdots\\
I  &  I   &   \dots   & I    
\end{bmatrix}
\quad \hbox{where}\quad~L\ge \frac{1}{\lambda+c_K^2}, 
\end{align}
and $c_K$ is the constant in the estimate
\begin{equation}\label{ck}
\|\eps(\bw)\|\ge c_{K}\|\divv \bw \| \quad \text{for all }\bw \in \bU
\end{equation}
 where $\| \cdot \|$ denotes the $L^2$ norm, on the left-hand side of~\eqref{ck} of a tensor-valued
and on the right-hand side of a scalar-valued function.
Note that~\eqref{ck} holds true for example for $c_K=1/\sqrt{d}$ where $d$ is the space dimension. 

\subsection{Uzawa-type methods in block Gauss-Seidel framework}
Now for any positive definite operator $M: {\boldsymbol P}^*\rightarrow  {\boldsymbol P}$, we consider 
the equivalent augmented MPET system 
\begin{align}\label{Augweak}
\hat {\mathcal A}\left(\begin{array}{c} \bv \\ \bp \\ \bu \end{array} \right) =
&\begin{bmatrix}
A_v + B^T_v \M B_v&B^T_v-B^T_v \M C&B^T_v \M B_u\\
-B_v&C&-B_u\\
0&B_u^T&A_u
\end{bmatrix}
\left(\begin{array}{c} \bv \\ \bp \\ \bu \end{array} \right)=\left(\begin{array}{c} B^T_v \M\bg\\ -\bg\\\bf\end{array} \right).
\end{align}
Further, for any positive definite operator $S: {\boldsymbol P}\rightarrow  {\boldsymbol P}^*$, 
we decompose $\hat {\mathcal A}$ in the form
\begin{align}
\hat {\mathcal A}
=
&\begin{bmatrix}
A_v + B^T_v \M B_v&0&0\\
-B_v&S&0\\
0&B_u^T&A_u
\end{bmatrix}
+
\begin{bmatrix}
0&B^T_v-B^T_v \M C&B^T_v \M B_u\\
0&-S+C&-B_u\\
0&0&0
\end{bmatrix}.
\end{align}
Next, applying the block Gauss-Seidel method to the above system yields
\begin{align}
&\begin{bmatrix}
A_v + B^T_v \M B_v&0&0\\
-B_v&S&0\\
0&B_u^T&A_u
\end{bmatrix}
\left(\begin{array}{c} \bv^{k+1} \\ \bp^{k+1} \\ \bu^{k+1} \end{array} \right)
+
\begin{bmatrix}
0&B^T_v-B^T_v \M C&B^T_v \M B_u\\
0&-S+C&-B_u\\
0&0&0
\end{bmatrix}
\left(\begin{array}{c} \bv^{k} \\ \bp^{k} \\ \bu^{k} \end{array} \right)
=\left(\begin{array}{c} B^T_v \M\bg\\ -\bg\\\bf\end{array} \right),
\end{align}
namely 
\begin{align}\label{GSsystem}
&\begin{bmatrix}
A_v + B^T_v \M B_v&0&0\\
-B_v&S&0\\
0&B_u^T&A_u
\end{bmatrix}
\left(\begin{array}{c} \bv^{k+1} \\ \bp^{k+1} \\ \bu^{k+1} \end{array} \right)
=\left(\begin{array}{c} B^T_v \M\bg\\ -\bg\\\bf\end{array} \right)
-
\begin{bmatrix}
0&B^T_v-B^T_v \M C&B^T_v \M B_u\\
0&-S+C&-B_u\\
0&0&0
\end{bmatrix}
\left(\begin{array}{c} \bv^{k} \\ \bp^{k} \\ \bu^{k} \end{array} \right).
\end{align}
System~\eqref{GSsystem} can be expressed in terms of bilinear forms as follows: 
\begin{algorithm}[H]
	\caption{
	Fully decoupled iterative scheme for weak flux-pressure-displacement formulation of MPET problem}
	\begin{algorithmic}[1]
 \Statex{\textbf{Step a:} Given $\bp^k$ and $\bu^k$, we first solve for $\bv^{k+1}$, such that for all ${\boldsymbol z} \in {\boldsymbol V}$ there holds}
\begin{align}
&(A_v\bv^{k+1},\bz)  +(\M\Divv \bv^{k+1},\Divv \bz) =- (\M\bg,\Divv \bz) + (\bp^{k},\Divv \bz) - (\M(\A_1+\A_2)\bp^k,\Divv \bz)-  (\M\Divu \bu^k,\Divv \bz).
\end{align}
 \Statex{\textbf{Step b:} Given $\bu^k$ and $\bv^{k+1}$, we solve for $\bp^{k+1}$, such that for all ${\boldsymbol q} \in {\boldsymbol P}$ there holds}
 \begin{align}
(\S\bp^{k+1},\bq)=-(\bg,\bq)+(\S\bp^k,\bq)- ((\A_1 +\A_2)\bp^k,\bq))-(\Divu\bu^k,\bq) - (\Divv\bv^{k+1},\bq) .
 \end{align}
 \Statex{\textbf{Step c:} Given $\bp^{k+1}$ and $\bv^{k+1}$, we solve for $\bu^{k+1}$, such that for all ${\boldsymbol w} \in {\boldsymbol U}$ there holds}
 \begin{align}
&(\eps(\bu^{k+1}), \eps(\bw))+ \lambda  (\divv\bu^{k+1},\divv \bw) = (\bf,\bw) + (\bp^{k+1},\Divu \bw).
 \end{align}
\end{algorithmic}
\label{alg1}
\end{algorithm}

\subsection{Preliminary results}
We first present a result from linear algebra which will be useful in the proof of Theorem~\ref{theorem_errors_vu} 
in Section~\ref{sec:convergence}.

\begin{lemma}\label{etauu:7}
For any $a>0$ and $b>0$, denote $\boldsymbol e=(\underbrace{1,\dots,1}_n)^T$ and $(a I_{n\times n}+b \boldsymbol e\boldsymbol e^T)^{-1}=(b_{ij})_{n\times n}$. Then we have that
\begin{equation}
0<\sum_{\substack{i=1}}^n\sum_{\substack{j=1}}^n b_{ij}=\frac{n}{(a+nb)}.
\end{equation}
\end{lemma}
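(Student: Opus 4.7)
The plan is to observe that $\sum_{i,j} b_{ij}$ is precisely $\boldsymbol e^T (aI_{n\times n}+b\boldsymbol e \boldsymbol e^T)^{-1} \boldsymbol e$, so the whole statement reduces to computing this quadratic form. The key observation is that $\boldsymbol e$ is itself an eigenvector of $aI_{n\times n}+b\boldsymbol e \boldsymbol e^T$: since $\boldsymbol e^T\boldsymbol e = n$, one has
\begin{equation*}
(aI_{n\times n}+b\boldsymbol e \boldsymbol e^T)\,\boldsymbol e = a\boldsymbol e + b\boldsymbol e(\boldsymbol e^T \boldsymbol e) = (a+nb)\,\boldsymbol e.
\end{equation*}
Because $a>0$ and $b>0$, the eigenvalue $a+nb$ is strictly positive, which in particular shows that the matrix is invertible and that $\boldsymbol e$ is an eigenvector of the inverse with eigenvalue $1/(a+nb)$.

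The second step is then to multiply by $\boldsymbol e^T$ on the left to obtain
\begin{equation*}
\sum_{i=1}^{n}\sum_{j=1}^{n} b_{ij} \;=\; \boldsymbol e^T (aI_{n\times n}+b\boldsymbol e \boldsymbol e^T)^{-1} \boldsymbol e \;=\; \frac{1}{a+nb}\,\boldsymbol e^T\boldsymbol e \;=\; \frac{n}{a+nb},
\end{equation*}
which is the claimed identity. Positivity of the sum follows at once from $a,b>0$.

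There is essentially no obstacle here, since the statement is a one-line consequence of the rank-one structure of $b\boldsymbol e\boldsymbol e^T$. If one preferred an alternative route, the Sherman--Morrison formula gives the explicit inverse $\frac{1}{a}I_{n\times n}-\frac{b}{a(a+nb)}\boldsymbol e\boldsymbol e^T$, from which summing all entries and simplifying $\frac{n}{a}-\frac{bn^2}{a(a+nb)}$ yields the same answer; but the eigenvector argument above is shorter and avoids the telescoping algebra.
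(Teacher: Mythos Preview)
Your argument is correct. The paper's own proof proceeds via the Sherman--Morrison--Woodbury formula (referring back to Lemma~1 in \cite{Hong2018conservativeMPET}), which is exactly the alternative route you sketch at the end: one writes $(aI+b\boldsymbol e\boldsymbol e^T)^{-1}=\frac{1}{a}I-\frac{b}{a(a+nb)}\boldsymbol e\boldsymbol e^T$ explicitly and then sums all entries. Your primary approach is different and more elementary: you observe directly that $\boldsymbol e$ is an eigenvector of $aI+b\boldsymbol e\boldsymbol e^T$ with eigenvalue $a+nb$, hence also of the inverse with eigenvalue $1/(a+nb)$, and the desired sum is just the quadratic form $\boldsymbol e^T(\cdot)^{-1}\boldsymbol e$. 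This avoids both the explicit inversion formula and the subsequent simplification $\frac{n}{a}-\frac{bn^2}{a(a+nb)}=\frac{n}{a+nb}$. The Sherman--Morrison route has the minor advantage of producing all individual entries $b_{ij}$ if one ever needed them, but for the statement as written your eigenvector argument is shorter and entirely self-contained.
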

\begin{proof}
The proof is based on the Sherman-Morrison-Woodbury formula and follows the arguments of the proof
of Lemma 1 in \cite{Hong2018conservativeMPET}. 
\end{proof}

Next, let us recall some well known results, see~\cite{ Brezzi1974existence,Boffi2013mixed}.
\begin{lemma} There exists a constant $\beta_s > 0$ such that:
\begin{align}
	\inf_{(q_1,\cdots,q_n)\in P_1\times\cdots\times P_n}
\sup_{{\boldsymbol u}\in {\boldsymbol U}}
\frac{\left({\rm div} {\boldsymbol u}, \sum\limits_{i=1}^n q_i\right)}{\|{\boldsymbol u}\|_1\left\|\sum\limits_{i=1}^n q_i\right\|} \geq \beta_s  
	\end{align}
	\label{lemma1}	
\end{lemma}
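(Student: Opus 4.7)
The plan is to reduce this multi-pressure inf-sup condition to the classical single-pressure inf-sup condition for the divergence operator, which is what the cited references~\cite{Brezzi1974existence,Boffi2013mixed} establish. The essential point is that the functional $q := \sum_{i=1}^n q_i$ that appears in both the numerator and the denominator is a \emph{single} scalar function in an appropriate $L^2$-type space, so one never has to work with the $n$-tuple $(q_1,\ldots,q_n)$ componentwise at all.

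First I would clarify the space in which $q$ lives. By the definition given in the previous subsection, each $P_i$ equals $L^2(\Omega)$ in general and $L^2_0(\Omega)$ in the pure-Dirichlet case $\Gamma_{\bu,D}=\partial\Omega$. Hence $q=\sum_{i=1}^n q_i$ belongs to $L^2(\Omega)$ in the first case, and to $L^2_0(\Omega)$ in the second (since a finite sum of zero-mean functions has zero mean). In both cases, $q$ lies in precisely the pressure space for which the classical Stokes inf-sup condition is known to hold over $\bU$.

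Next, I would invoke the classical result: there exists a constant $\beta_s>0$, depending only on $\Omega$ and the Dirichlet portion $\Gamma_{\bu,D}$, such that
\begin{equation*}
\sup_{\bu\in\bU}\frac{(\divv\bu,q)}{\|\bu\|_1}\;\ge\;\beta_s\,\|q\|\qquad\text{for every admissible }q.
\end{equation*}
Equivalently, for any such $q$ one can construct $\bu_q\in\bU$ with $\divv\bu_q = q$ and $\|\bu_q\|_1\le \beta_s^{-1}\|q\|$; this is the surjectivity of $\divv:\bU\to L^2$ (or $L^2_0$) with bounded right-inverse. Applying this with $q=\sum_{i=1}^n q_i$ and dividing the displayed bound by $\|q\|$ gives
\begin{equation*}
\sup_{\bu\in\bU}\frac{\bigl(\divv\bu,\sum_{i=1}^n q_i\bigr)}{\|\bu\|_1\,\bigl\|\sum_{i=1}^n q_i\bigr\|}\;\ge\;\beta_s,
\end{equation*}
uniformly in $(q_1,\ldots,q_n)$, and taking the infimum over the product space yields the claim.

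There is no real obstacle here — the only mild subtlety is the boundary-condition case distinction that forces the correct choice $P_i = L^2_0(\Omega)$ when $\Gamma_{\bu,D}=\partial\Omega$, so that $\sum_i q_i$ is compatible with the range of $\divv$ on $\bU$. Once that is handled, the argument is a one-line reduction to the standard Stokes inf-sup result. The constant $\beta_s$ inherited from~\cite{Brezzi1974existence,Boffi2013mixed} is independent of $n$ and of all physical parameters, which is exactly what is needed for the subsequent parameter-robust convergence analysis.
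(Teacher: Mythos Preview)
Your proposal is correct. The paper itself does not give a proof of this lemma at all: it is simply stated as a ``well known result'' with a citation to~\cite{Brezzi1974existence,Boffi2013mixed}. Your reduction to the classical single-pressure Stokes inf-sup condition is exactly the reason the lemma follows from those references, and your handling of the boundary-condition case distinction (so that $\sum_i q_i$ lands in the range of $\divv$ on $\bU$) is the right way to make the citation precise.
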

\begin{lemma} There exists a constant $\beta_d > 0$ such that: 
	\begin{align}
	\inf_{q\in P_i} \sup_{{\boldsymbol v} \in {\boldsymbol V}_i} \frac{({\rm div} {\boldsymbol v}, q)}{\|{\boldsymbol v}\|_{\rm div}\|q\|}
\geq \beta_d, \quad i=1,\dots,n. 
	\end{align}
	\label{lemma2}
\end{lemma}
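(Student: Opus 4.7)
The assertion is the classical $H(\mathrm{div})$--$L^2$ inf-sup for the divergence operator, restricted to a single network's pair $(\bV_i, P_i)$. Because the blocks decouple in the index $i$, it suffices to prove the estimate for a fixed but arbitrary $i$, producing a constant $\beta_d$ depending only on $\Omega$ and its boundary partition; the common value $\beta_d := \min_i \beta_d^{(i)}$ then works for all networks. The plan is to exhibit, for each $q \in P_i$, a concrete candidate $\bv \in \bV_i$ with $\divv \bv = q$ and $\|\bv\|_{\divv} \le C_\Omega \|q\|$, and conclude by substituting this $\bv$ into the sup.

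The natural candidate arises from an auxiliary Laplace problem whose Neumann portion coincides with the essential-boundary portion $\Gamma_{p_i,N}$ of $\bV_i$: find $\phi \in H^1(\Omega)$ solving $-\Delta \phi = q$ with $\partial_n \phi = 0$ on $\Gamma_{p_i,N}$ and $\phi = 0$ on $\partial\Omega \setminus \Gamma_{p_i,N}$, and set $\bv := -\nabla \phi$. In the degenerate case $\Gamma_{p_i,N} = \partial\Omega$ (fully no-flow), $P_i = L^2_0(\Omega)$ supplies the compatibility $\int_\Omega q\,dx = 0$ needed for solvability under pure Neumann data, and one fixes $\phi$ by the mean-zero condition to restore uniqueness. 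By construction $\divv \bv = q$ in $L^2(\Omega)$, so $\bv \in H(\divv,\Omega)$, and $\bv \cdot \bn = -\partial_n \phi = 0$ on $\Gamma_{p_i,N}$, so $\bv \in \bV_i$.

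Standard elliptic regularity on the Lipschitz domain $\Omega$ yields $\|\nabla\phi\| \le C_\Omega \|q\|$ (using Poincaré when $\partial\Omega\setminus\Gamma_{p_i,N}$ has positive surface measure, or the Poincaré--Wirtinger inequality on $H^1(\Omega)\cap L^2_0$ in the fully Neumann case). Hence
$$
\|\bv\|_{\divv}^{2} \;=\; \|\bv\|^{2} + \|\divv \bv\|^{2} \;\le\; (C_\Omega^{2} + 1)\,\|q\|^{2}.
$$
Plugging this $\bv$ into the supremum,
$$
\sup_{\bw \in \bV_i} \frac{(\divv \bw, q)}{\|\bw\|_{\divv}\|q\|} \;\ge\; \frac{(\divv \bv, q)}{\|\bv\|_{\divv}\|q\|} \;=\; \frac{\|q\|}{\|\bv\|_{\divv}} \;\ge\; \frac{1}{\sqrt{C_\Omega^{2}+1}} \;=:\; \beta_d^{(i)},
$$
uniformly in $q \in P_i$, which gives the claim after taking the infimum.

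The only genuinely delicate point is the well-posedness of the auxiliary mixed boundary-value problem together with the $H^1$-bound; this is routine under the standing assumption that $\Omega$ is a bounded Lipschitz domain, but deserves care in the pure-Neumann regime, where the mean-zero restriction encoded in the definition of $P_i$ is precisely what makes the construction go through. Everything else is a direct computation.
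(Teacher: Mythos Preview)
Your argument is correct: the construction $\bv = -\nabla\phi$ from the auxiliary mixed Dirichlet--Neumann Poisson problem is one of the standard routes to the $H(\mathrm{div})$--$L^2$ inf-sup condition, and you handle the pure-Neumann case (where $P_i = L^2_0(\Omega)$ supplies the compatibility condition) appropriately. One small terminological quibble: the bound $\|\nabla\phi\| \le C_\Omega \|q\|$ is really the energy estimate from well-posedness combined with Poincar\'e, not ``elliptic regularity'' in the usual sense of higher Sobolev smoothness; the argument does not need anything beyond $\phi \in H^1$.

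As for comparison with the paper: there is nothing to compare. The paper does not prove this lemma at all; it introduces Lemmas~\ref{lemma1} and~\ref{lemma2} with the phrase ``let us recall some well known results'' and defers to the references of Brezzi (1974) and Boffi--Brezzi--Fortin (2013). Your proof supplies what the paper simply cites.
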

Here $\| \cdot \|_1$ and $\|{\boldsymbol v}\|_{\rm div}$ denote the standard $H^1$ and $H({\rm div})$ norms of vector-valued functions,
respectively, i.e., $\|{\boldsymbol u}\|_1^2 := \int_\Omega \nabla \boldsymbol u : \nabla \boldsymbol u + \boldsymbol u \cdot \boldsymbol u \, dx$
and $\|{\boldsymbol v}\|_{\rm div}^2 := \int_\Omega \divv {\boldsymbol v} \, \divv {\boldsymbol v} + \boldsymbol v \cdot \boldsymbol v \, dx$.

Our task will be to study the errors 
\begin{subequations}\label{error}
\begin{eqnarray}
\be_u^k & = & \bm u^k-\bm u\in \bU, \\
\be_{v_i}^k&=&\bm v_i^k-\bm v_i \in \bV_i, \quad i=1,\ldots,n, \\
e_{p_i}^k&=&p_i^k-p_i \in P_i, \quad i=1,\ldots,n, 
\end{eqnarray}
\end{subequations}
of the $k$-th iterates $\bm u^k$, $\bm v^k_i$, $p^k_i$, $i=1,\ldots,n$, generated by Algrorithm~\ref{alg1}. 
For that reason, 
we consider  
the following error equations
\begin{subequations}
\begin{align}
&(A_v\ev^{k+1},\bz) - (\ep^{k},\Divv \bz) + (\M\Divu \eu^{k},\Divv \bz) + (\M\Divv \ev^{k+1},\Divv \bz) + (\M(\A_1+\A_2)\ep^k,\Divv \bz)=0,\label{uzawaq1b}\\
&(\S\ep^{k+1},\bq)-(\S\ep^{k},\bq) + (\Divu\eu^{k},\bq) +(\Divv\ev^{k+1},\bq) + ((\A_1 +\A_2)\ep^{k},\bq)=0,\label{uzawaq1c}\\
&(\eps(\eu^{k+1}), \eps(\bw))+ \lambda (\divv\eu^{k+1},\divv \bw) -(\ep^{k+1},\Divu \bw) = 0\label{uzawaq1a}
\end{align}\label{uzawaq1}
\end{subequations}
where the error block-vectors $\be_v^k$ 
and $\be_p^k$ are given by 
$(\be_{\bv}^k)^T = ((\be_{v_1}^k)^T,\ldots,(\be_{v_n}^k))^T$, $(\be_{\bp}^k)^T = (e_{p_1}^k,\ldots,e_{p_n}^k)$.

To complete the design of Algorithm \ref{alg1}, we need to specify $M$ and $S$.   
By Lemma \ref{lemma2} we have that for all
$\be_{p_i}^{k+1} \in P_i$ there exists $\bpsi_i \in \bV_i$ such that 
$\divv\bpsi_i =\be_{p_i}^{k+1}$ and $\|\bpsi_i\|_{\divn} \leq { \beta_d^{-1}\|\be_{p_i}^{k+1}\|}$ for all
$i=1,\dots,n$, i.e., $\Divv\bpsi =\ep^{k+1}$ and  $\|\bpsi\|_{\divn} \leq { \beta_{v}^{-1}\|\ep^{k+1}\|}.$
Setting $\bq= \S^{-1}\ep^{k+1}$ in \eqref{uzawaq1c} and $\bz=\bpsi$ in \eqref{uzawaq1b}, from $\Divv \bpsi=\ep^{k+1}$
it follows that
\begin{subequations}
	\begin{align}
	&(A_v\ev^{k+1},\bpsi) - (\ep^{k}, \ep^{k+1}) + (\M\Divu \eu^{k+1},\ep^{k+1}) +(\M\Divv \ev^{k+1},\ep^{k+1}) + (\M(\A_1+\A_2)\ep^k, \ep^{k+1})=0,\label{uzawaq4a}\\
	&(\ep^{k+1},\ep^{k+1})-(\ep^{k},\ep^{k+1})+(\S^{-1}\Divu\eu^{k}, \ep^{k+1}) +(\S^{-1}\Divv\ev^{k+1}, \ep^{k+1}) +(\S^{-1}(\A_1 +\A_2)\ep^{k}, \ep^{k+1})=0.\label{uzawaq4b}
	\end{align}\label{uzawaq4}
\end{subequations}
Subtracting \eqref{uzawaq4a} from \eqref{uzawaq4b} yields
\begin{align*}
\|\ep^{k+1}\|^2&=(A_v\ev^{k+1},\bpsi)-((\S^{-1}-\M)(\Divu\eu^{k} +\Divv\ev^{k+1} +(\A_1 +\A_2)\ep^{k}), \ep^{k+1}),
\end{align*} 
implying 
\begin{align*}
\|\ep^{k+1}\|^2&\le\|A_v^{\frac{1}{2}}\ev^{k+1}\|\|A_v^{\frac{1}{2}}\bpsi\|+\|(\S^{-1}-\M)(\Divu\eu^{k+1} +\Divv\ev^{k+1} +(\A_1 +\A_2)\ep^{k})\| \|\ep^{k+1}\|\\
&\le\sqrt{R^{-1}}\|\R_v^{\frac{1}{2}}\ev^{k+1}\|\|\bpsi\|+\|(\S^{-1}-\M)(\Divu\eu^{k} +\Divv\ev^{k+1} +(\A_1 +\A_2)\ep^{k})\| \|\ep^{k+1}\|\\
&\le\beta_d^{-1}\sqrt{R^{-1}}\|\R_v^{\frac{1}{2}}\ev^{k+1}\|\|\ep^{k+1}\|+\|(\S^{-1}-\M)(\Divu\eu^{k} +\Divv\ev^{k+1} +(\A_1 +\A_2)\ep^{k})\| \|\ep^{k+1}\|.
\end{align*} 
We conclude that 
\begin{align}\label{SM}
\|\ep^{k+1}\|&\le\beta_d^{-1}\sqrt{R^{-1}}\|\R_v^{\frac{1}{2}}\ev^{k+1}\|+\|(\S^{-1}-\M)(\Divu\eu^{k} +\Divv\ev^{k+1} +(\A_1 +\A_2)\ep^{k})\|.
\end{align} 
Estimate \eqref{SM} suggests choosing $\S=\M^{-1}$ in order to minimize the upper bound for $\|\ep^{k+1}\|$. This results in the 
following statement. 
\begin{lemma}
Consider Algorithm \ref{alg1} and let $\S=\M^{-1}$, then we have
\begin{align}
\text{$\|\R_v^\frac{1}{2}\ev^{k+1}\|^2 \geq R\beta_d^2\|\ep^{k+1}\|^2 =\beta_d^2\|\A_3^{\frac{1}{2}}\ep^{k+1}\|^2.$}
\label{u3}
\end{align}

\end{lemma}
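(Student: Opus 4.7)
The plan is to exploit the fact that the bound \eqref{SM} has already been derived in the lead-up to the lemma and is valid for any positive definite $M$ and $S$; the hypothesis $S = M^{-1}$ is specifically designed to kill the second term of that estimate. So my strategy is to substitute $S = M^{-1}$ into \eqref{SM} and then simply square the resulting inequality and rewrite it in the form stated.

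In more detail, I would proceed as follows. First, I would observe that with $S = M^{-1}$ we have $S^{-1} - M = M - M = 0$, so the operator norm term
\[
\|(S^{-1} - M)(\underline{\Divv}\,e_u^{k} + \Divv e_v^{k+1} + (\Lambda_1 + \Lambda_2) e_p^{k})\|
\]
in \eqref{SM} drops out entirely. Hence \eqref{SM} collapses to
\[
\|e_p^{k+1}\| \;\le\; \beta_d^{-1}\sqrt{R^{-1}}\,\|A_v^{1/2} e_v^{k+1}\|.
\]
Squaring both sides and multiplying through by $R \beta_d^2$ then yields
\[
\|A_v^{1/2} e_v^{k+1}\|^2 \;\ge\; R\beta_d^2\,\|e_p^{k+1}\|^2.
\]

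To finish, I would invoke the definition of $\Lambda_3$ introduced earlier, namely $\Lambda_3 = R\,I_{n\times n}$ (the diagonal block with entries $R$). Directly from this definition one has $\|\Lambda_3^{1/2} e_p^{k+1}\|^2 = R\,\|e_p^{k+1}\|^2$, which gives the second equality in the claim.

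Since every ingredient—the inequality \eqref{SM}, the inf-sup constant $\beta_d$ from Lemma \ref{lemma2} via the choice of $\bpsi$, and the algebraic identity $S^{-1} - M = 0$—is either given or immediate, I do not anticipate any technical obstacle in the proof; the lemma is essentially a one-line corollary of the calculation that motivated the choice $S = M^{-1}$. The only care needed is to keep track of the fact that $A_v$ and $\Lambda_3$ are defined so that $A_v = R^{-1}I$ on each block (up to the block-diagonal structure) and $\Lambda_3 = R I$, making the two norms on the right-hand side genuinely equal rather than merely equivalent.
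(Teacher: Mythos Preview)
Your proposal is correct and follows exactly the approach of the paper: the lemma is stated immediately after \eqref{SM} as its direct consequence once $S=M^{-1}$ makes the second term vanish, and the equality $R\beta_d^2\|\ep^{k+1}\|^2=\beta_d^2\|\Lambda_3^{1/2}\ep^{k+1}\|^2$ is just the definition of $\Lambda_3$.
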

\noindent
The relationship $S=M^{-1}$ reduces our design task to the determination of either $S$ or $M$. 
%
In the remainder of this paper, we analyze and numerically test Algorithm~\ref{alg1} for the specific choice
\begin{equation}\label{defS}
S:=\A_1+\A_2+L_1\A_3+L_2\A_4,
\end{equation}
where $L_1$ and $L_2$ are scalar parameters which are later to be determined.

\section{Convergence theory of Uzawa-type algorithms for MPET}\label{sec:convergence}	

This section is devoted to the convergence analysis of Algorithm~\ref{alg1}. Our aim is to establish a uniform 
bound on the convergence rate, i.e., a bound independent 
of any model and discretization parameters. 

We start with deriving some useful 
auxiliary results presented 
in the following two lemmas. These afterwards assist us in establishing a parameter-robust upper bound on the pressure error in 
a weighted norm.

\begin{lemma}
Considering Algorithm~\ref{alg1} with $S$ as defined in~\eqref{defS},
 the errors $\be_{\bu}^k$, $\be_{\bv}^k $ and $ \be_{\bp}^k$ defined in \eqref{error} \label{alg3le1} satisfy the following estimate: 
\begin{align}
&\frac{1}{2}\|\eps(\eu^{k+1})\|^2+\frac{\lambda}{2}\|\mathrm{div}\eu^{k+1}\|^2+ \|\R_v^{\frac{1}{2}}\ev^{k+1}\|^2+\|(\A_1 +\A_2)^{\frac{1}{2}}\ep^{k+1}\|^2
+\frac{L_1}{2}\|\A_3^{\frac{1}{2}}\ep^{k+1}\|^2+ \frac{L_2}{2} \|\A_4^{\frac{1}{2}}\ep^{k+1}\|^2\nonumber\\&\leq\frac{L_1}{2}\|\A_3^{\frac{1}{2}}\ep^k\|^2
+\frac{L_2}{2}\|\A_4^{\frac{1}{2}}\ep^{k}\|^2+\left(\frac{\lambda_0}{2(c_K^2+\lambda)}-\frac{L_2}{2}-\frac{L_1R\lambda_0}{2n}\right)\| \A_4^{\frac{1}{2}}(\ep^{k+1}-\ep^k)\|^2.\label{3last}
\end{align}
\end{lemma}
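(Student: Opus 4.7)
My strategy is to test the three error equations \eqref{uzawaq1b}--\eqref{uzawaq1a} with carefully chosen trial functions and recombine them into an energy estimate. The first step is to establish the auxiliary identity
\begin{equation*}
(\ep^{k+1},\Divv\ev^{k+1}) = \|A_v^{1/2}\ev^{k+1}\|^2.
\end{equation*}
This follows by testing \eqref{uzawaq1b} with $\bz = \ev^{k+1}$ and \eqref{uzawaq1c} with $\bq = \M\,\Divv\ev^{k+1}$; using $\S\M = I$ and the symmetry of $\M$, the two identities share all terms except $\|A_v^{1/2}\ev^{k+1}\|^2$ and $(\ep^{k+1},\Divv\ev^{k+1})$, so subtracting yields the claim.

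Next, I test \eqref{uzawaq1c} with $\bq = \ep^{k+1}$ and \eqref{uzawaq1a} with $\bw = \eu^{k+1}$, sum the two, and use the auxiliary identity to eliminate $(\Divv\ev^{k+1},\ep^{k+1})$. Since $\S - (\A_1+\A_2) = L_1\A_3 + L_2\A_4$, this produces
\begin{equation*}
\|A_v^{1/2}\ev^{k+1}\|^2 + \|\eps(\eu^{k+1})\|^2 + \lambda\|\divv\eu^{k+1}\|^2 + \|\S^{1/2}\ep^{k+1}\|^2 = \big((L_1\A_3+L_2\A_4)\ep^k,\ep^{k+1}\big) + \big(\ep^{k+1},\Divu(\eu^{k+1}-\eu^k)\big).
\end{equation*}
Applying the polarization identity $2(Xa,b) = \|a\|_X^2 + \|b\|_X^2 - \|a-b\|_X^2$ separately with $X = L_1\A_3$ and $X = L_2\A_4$ to the first right-hand side term, then expanding $\|\S^{1/2}\ep^{k+1}\|^2$ into its $\A_1+\A_2$, $\A_3$, and $\A_4$ parts, produces all four pressure norms on the left with the coefficients stated in the lemma, and on the right the contributions $\frac{L_1}{2}\|\A_3^{1/2}\ep^k\|^2 + \frac{L_2}{2}\|\A_4^{1/2}\ep^k\|^2$ together with the non-positive increment terms $-\frac{L_i}{2}\|\cdot(\ep^{k+1}-\ep^k)\|^2$ for $i=1,2$.

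The main obstacle is controlling the residual coupling $(\ep^{k+1},\Divu(\eu^{k+1}-\eu^k))$, which must supply both the $\tfrac{1}{2}$-factors in front of $\|\eps(\eu^{k+1})\|^2$ and $\lambda\|\divv\eu^{k+1}\|^2$ on the left and the $\tfrac{\lambda_0}{2(c_K^2+\lambda)}$ contribution on the right. I handle this in two moves. First, testing \eqref{uzawaq1a} with $\bw = \eu^{k+1}-\eu^k$ and polarizing the resulting bilinear forms yields
\begin{equation*}
(\ep^{k+1},\Divu(\eu^{k+1}-\eu^k)) = \tfrac{1}{2}\|\eps(\eu^{k+1})\|^2 + \tfrac{\lambda}{2}\|\divv\eu^{k+1}\|^2 + \tfrac{1}{2}\|\eps(\eu^{k+1}-\eu^k)\|^2 + \tfrac{\lambda}{2}\|\divv(\eu^{k+1}-\eu^k)\|^2 - \tfrac{1}{2}\|\eps(\eu^k)\|^2 - \tfrac{\lambda}{2}\|\divv\eu^k\|^2;
\end{equation*}
the first two terms combine with the matching left-hand side norms to leave only the required $\tfrac{1}{2}$-factors, while the last two (non-positive) $\eu^k$ contributions are dropped, turning the identity into an inequality. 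Second, subtracting \eqref{uzawaq1a} at two consecutive iterates and testing with $\eu^{k+1}-\eu^k$ gives $\|\eps(\eu^{k+1}-\eu^k)\|^2 + \lambda\|\divv(\eu^{k+1}-\eu^k)\|^2 = (\ep^{k+1}-\ep^k,\Divu(\eu^{k+1}-\eu^k))$; combining Cauchy--Schwarz with the Korn-type bound $(c_K^2+\lambda)\|\divv\bw\|^2 \le \|\eps(\bw)\|^2 + \lambda\|\divv\bw\|^2$ implied by \eqref{ck}, together with $\|\A_4^{1/2}\bp\|^2 = \lambda_0^{-1}\|\sum_i p_i\|^2$, yields
\begin{equation*}
\tfrac{1}{2}\|\eps(\eu^{k+1}-\eu^k)\|^2 + \tfrac{\lambda}{2}\|\divv(\eu^{k+1}-\eu^k)\|^2 \le \tfrac{\lambda_0}{2(c_K^2+\lambda)}\|\A_4^{1/2}(\ep^{k+1}-\ep^k)\|^2.
\end{equation*}
Finally, the elementary inequality $\|\A_3^{1/2}\bp\|^2 = R\|\bp\|^2 \ge \tfrac{R\lambda_0}{n}\|\A_4^{1/2}\bp\|^2$, a consequence of $(\sum_i p_i)^2 \le n\sum_i p_i^2$, absorbs the leftover $-\tfrac{L_1}{2}\|\A_3^{1/2}(\ep^{k+1}-\ep^k)\|^2$ into the $\A_4$-increment term, producing the composite coefficient $\tfrac{\lambda_0}{2(c_K^2+\lambda)} - \tfrac{L_2}{2} - \tfrac{L_1 R \lambda_0}{2n}$ stated in \eqref{3last}.
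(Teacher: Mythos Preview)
Your proof is correct and follows essentially the same route as the paper: the auxiliary identity $(\ep^{k+1},\Divv\ev^{k+1})=\|A_v^{1/2}\ev^{k+1}\|^2$, the summed energy identity, the polarization of $((L_1\A_3+L_2\A_4)\ep^k,\ep^{k+1})$, the consecutive-iterate difference combined with the Korn-type bound~\eqref{ck}, and the absorption $\|\A_3^{1/2}\cdot\|^2 \ge \tfrac{R\lambda_0}{n}\|\A_4^{1/2}\cdot\|^2$ all appear in the paper in the same order. The only cosmetic difference is your treatment of $(\ep^{k+1},\Divu(\eu^{k+1}-\eu^k))$: you expand $(\eps(\eu^{k+1}-\eu^k),\eps(\eu^{k+1}))$ via the exact polarization identity and then drop the non-positive $\eu^k$-terms, whereas the paper applies Young's inequality directly to this inner product; both yield the same bound~\eqref{divk+1}.
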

\begin{proof}
	By setting $\bq= \M\Divv \ev^{k+1}$ in \eqref{uzawaq1c} and $\bz=\ev^{k+1}$ in \eqref{uzawaq1b}
	we obtain
		\begin{align*}%
		&(\R_v\ev^{k+1},\ev^{k+1}) - (\ep^{k},\Divv \ev^{k+1}) + (\M\Divu \eu^{k},\Divv \ev^{k+1}) +(\M\Divv \ev^{k+1},\Divv \ev^{k+1}) + (\M(\A_1+\A_2)\ep^k,\Divv \ev^{k+1})=0,
		\\
		&(\ep^{k+1},\Divv\ev^{k+1})=(\ep^{k},\Divv\ev^{k+1})
		- (\M\Divu\eu^{k},\Divv\ev^{k+1}) - (\M\Divv\ev^{k+1},\Divv\ev^{k+1}) 
		- (\M(\A_1 +\A_2)\ep^{k},\Divv\ev^{k+1})
		\end{align*}
	from where it immediately follows that
	\begin{align}
	(\ep^{k+1},\Divv\ev^{k+1})=(\R_v\ev^{k+1},\ev^{k+1}).
	\label{3u1}
	\end{align}  
Choosing $\bq= \ep^{k+1}$ in \eqref{uzawaq1c} and
	$\bw=\eu^{k+1}$ in \eqref{uzawaq1a} yields
	\begin{subequations}
		\begin{align}
		&(\eps(\eu^{k+1}), \eps(\eu^{k+1}))+ \lambda  (\divv\eu^{k+1},\divv \eu^{k+1}) -(\ep^{k+1},\Divu \eu^{k+1})= 0,\label{3uzawaq3a}\\
		&(S\ep^{k+1},\ep^{k+1})=(S\ep^{k},\ep^{k+1})- (\Divu\eu^{k},\ep^{k+1}) 
		- (\Divv\ev^{k+1},\ep^{k+1}) - ((\A_1 +\A_2)\ep^{k},\ep^{k+1}).\label{3uzawaq3b}
		\end{align}\label{3uzawaq3}
	\end{subequations}
	
	\noindent	
Next, summing \eqref{3uzawaq3a} and \eqref{3uzawaq3b} and applying \eqref{3u1} it follows that
	\begin{align}
	\|\eps(\eu^{k+1})\|^2+\lambda\|\divv\eu^{k+1}\|^2+\|S^{\frac{1}{2}}\ep^{k+1}\|^2-((L_1\A_3 +L_2\A_4)\ep^{k},\ep^{k+1}) 
	= &(\Divu\eu^{k+1}-\Divu\eu^{k},\ep^{k+1}) -\|\R_v^{\frac{1}{2}}\ev^{k+1}\|^2.\label{3ee}
	\end{align}
     In order to simplify \eqref{3ee} we first rewrite $\| S^{\frac{1}{2}}\ep^{k+1}\|^2-((L_1\A_3 +L_2\A_4)\ep^{k},\ep^{k+1})$, 
     that is,
	\begin{align}
	\| S^{\frac{1}{2}}\ep^{k+1}\|^2-((L_1\A_3 +L_2\A_4)\ep^{k},\ep^{k+1})&
	= \|(\A_1 +\A_2)^{\frac{1}{2}}\ep^{k+1}\|^2+\frac{L_1}{2}(\|\A_3^{\frac{1}{2}}\ep^{k+1}\|^2-\|\A_3^{\frac{1}{2}}\ep^{k}\|^2
	+\| \A_3^{\frac{1}{2}}(\ep^{k+1}-\ep^k)\|^2) \nonumber\\&~+\frac{L_2}{2}\left( \|\A_4^{\frac{1}{2}}\ep^{k+1}\|^2
	-\|\A_4^{\frac{1}{2}}\ep^{k}\|^2+\| \A_4^{\frac{1}{2}}(\ep^{k+1}-\ep^k)\|^2\right)\nonumber\\
	&
	\geq \|(\A_1 +\A_2)^{\frac{1}{2}}\ep^{k+1}\|^2 
	+\frac{L_1}{2}\|\A_3^{\frac{1}{2}}\ep^{k+1}\|^2+\frac{L_2}{2}\|\A_4^{\frac{1}{2}}\ep^{k+1}\|^2
	 \nonumber\\&
	 -\frac{L_1}{2}\|\A_3^{\frac{1}{2}}\ep^{k}\|^2-\frac{L_2}{2}\|\A_4^{\frac{1}{2}}\ep^{k}\|^2
	 +\left(\frac{L_2}{2}+\frac{L_1R\lambda_0}{2n}\right)\| \A_4^{\frac{1}{2}}(\ep^{k+1}-\ep^k)\|^2.
	 \label{3kplus}
	\end{align}
Second, we estimate $ (\Divu\eu^{k+1}-\Divu\eu^{k},\ep^{k+1})$. By setting $\bw=\eu^{k+1}-\eu^k$ in \eqref{uzawaq1a} we obtain
	\begin{align}
	(\ep^{k+1},\Divu (\eu^{k+1}-\eu^{k}))&=(\eps(\eu^{k+1}-\eu^{k}), \eps(\eu^{k+1}))+ \lambda  (\divv(\eu^{k+1}-\eu^{k}),\divv \eu^{k+1})\nonumber \\
	&\leq\frac{1}{2}(\|\eps(\eu^{k+1}-\eu^{k})\|^2 + \lambda \|\divv(\eu^{k+1}-\eu^{k} )\|^2)+\frac{1}{2}(\|\eps(\eu^{k+1})\|^2 + \lambda \|\divv\eu^{k+1}\|^2).\label{3ddplus}
	\end{align}
	In order to estimate the right-hand side of~\eqref{3ddplus},
we subtract the $k$-th error from the $(k+1)$-th error and choose $\bw=\eu^{k+1}-\eu^{k}$ in~\eqref{uzawaq1a} and herewith obtaining  
	\begin{align*}
	\|\eps(\eu^{k+1}-\eu^{k})\|^2+ \lambda \|\divv(\eu^{k+1}-\eu^{k})\|^2
	=(\sum_{i=1}^{n}(\be^{k+1}_{p_i}-\be^{k}_{p_i}),\divv (\eu^{k+1}-\eu^{k})).
	\end{align*}
	Applying Cauchy's inequality further yields 
	\begin{align}
	\|\eps(\eu^{k+1}-\eu^{k})\|^2+ \lambda \|\divv(\eu^{k+1}-\eu^{k})\|^2&=(\sum_{i=1}^{n}(\be^{k+1}_{p_i}-\be^{k}_{p_i}),\divv (\eu^{k+1}-\eu^{k}))\leq
	\|\sum_{i=1}^{n}(\be^{k+1}_{p_i}-\be^{k}_{p_i})\|\cdot\|\divv (\eu^{k+1}-\eu^{k})\|\nonumber\\&=\sqrt{\lambda_0}\|\A_4^{\frac{1}{2}}(\ep^{k+1}-\ep^{k})\|\cdot\|\divv (\eu^{k+1}-\eu^{k})\|.\label{cauchy}
	\end{align}
	Noting that
	\begin{align}
	(c_K^2 + \lambda )\|\divv \bw\|^2 \leq\|\eps(\bw)\|^2+ \lambda \|\divv\bw\|^2, \label{3korn}
	\end{align}
	which follows from~\eqref{ck}, 
	we directly obtain
\begin{align*}
	(c_K^2 + \lambda )\|\divv(\eu^{k+1}-&\eu^{k})\|^2 
	\leq \sqrt{\lambda_0}\|\A_4^{\frac{1}{2}}(\ep^{k+1}-\ep^{k})\|\cdot\|\divv (\eu^{k+1}-\eu^{k})\|,
	\end{align*}
	from~\eqref{cauchy}. The latter estimate implies
	\begin{align*}
	&\|\divv(\eu^{k+1}-\eu^{k})\|\leq
	\frac{\sqrt{\lambda_0}}{c_K^2+\lambda}\|\A_4^{\frac{1}{2}}(\ep^{k+1}-\ep^{k})\|.
	\end{align*}
	By using the above inequality in \eqref{cauchy}, it follows that
	\begin{align}
	\|\eps(\eu^{k+1}-\eu^{k})\|^2+ \lambda \|\divv(\eu^{k+1}-\eu^{k})\|^2 &\leq
	\frac{\lambda_0}{c_K^2+\lambda}\|\A_4^{\frac{1}{2}}(\ep^{k+1}-\ep^{k})\|^2. \label{3dplus}
	\end{align}
	\\
	Now, combining \eqref{3ddplus} and \eqref{3dplus} yields
	\begin{align}
	(\ep^{k+1},\Divu (\eu^{k+1}-\eu^{k})) \leq\frac{\lambda_0}{2(c_K^2+\lambda)}\|\A_4^{\frac{1}{2}}(\ep^{k+1}-\ep^{k})\|^2 +\frac{1}{2}(\|\eps(\eu^{k+1})\|^2 + \lambda \|\divv\eu^{k+1}\|^2).\label{divk+1}
	\end{align}
	Finally, inserting \eqref{3kplus} and \eqref{divk+1} in~\eqref{3ee} we have that
	\begin{align*}
	\|\eps(\eu^{k+1})\|^2+&\lambda\|\divv\eu^{k+1}\|^2
	+\|(\A_1 +\A_2)^{\frac{1}{2}}\ep^{k+1}\|^2+\frac{L_1}{2}\|\A_3^{\frac{1}{2}}\ep^{k+1}\|^2
	+\frac{L_2}{2} \|\A_4^{\frac{1}{2}}\ep^{k+1}\|^2 \\&\leq \frac{\lambda_0}{2(c_K^2+\lambda)}\|\A_4^{\frac{1}{2}}
	(\ep^{k+1}-\ep^{k})\|^2
	 +\frac{1}{2}(\|\eps(\eu^{k+1})\|^2 + \lambda \|\divv\eu^{k+1}\|^2) 
	 - \|\R_v^{\frac{1}{2}}\ev^{k+1}\|^2 \\&~~+ \frac{L_1}{2}\|\A_3^{\frac{1}{2}}\ep^{k}\|^2
	 +\frac{L_2}{2}\|\A_4^{\frac{1}{2}}\ep^{k}\|^2 
	 -\left(\frac{L_2}{2}+\frac{L_1R\lambda_0}{2n}\right)\| \A_4^{\frac{1}{2}}(\ep^{k+1}-\ep^k)\|^2
	\end{align*}
which shows \eqref{3last}.
\end{proof}

The next lemma provides a preliminary estimate for the pressure errors.

\begin{lemma}\label{lemma_errors_p}
	Consider Algorithm~\ref{alg1} with $S$ as in~\eqref{defS}. Then the errors $ \be_{\bp}^k$ defined in \eqref{error} satisfy
	\begin{align*}
	&\frac{\lambda_0}{2(\beta_s^{-2}+\lambda)}\|\A_4^{\frac{1}{2}}\ep^{k+1}\|^2+
	\text{$\beta_d^2\|\A_3^{\frac{1}{2}}\ep^{k+1}\|^2$}+ \|(\A_1 +\A_2)^{\frac{1}{2}}\ep^{k+1}\|^2+\frac{L_1}{2}\|\A_3^{\frac{1}{2}}\ep^{k+1}\|^2+ \frac{L_2}{2} \|\A_4^{\frac{1}{2}}\ep^{k+1}\|^2\nonumber\\
	&\qquad \leq \frac{L_1}{2}\|\A_3^{\frac{1}{2}}\ep^k\|^2+\frac{L_2}{2}\|\A_4^{\frac{1}{2}}\ep^{k}\|^2+\left(\frac{\lambda_0}{2(c_K^2+\lambda)}-\frac{L_2}{2}-
	\frac{L_1R\lambda_0}{2n}\right)\|\A_4^{\frac{1}{2}}(\ep^{k+1}-\ep^{k})\|^2.
	\end{align*}\label{alg3le2}
\end{lemma}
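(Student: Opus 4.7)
The target bound differs from the inequality of Lemma~\ref{alg3le1} only on the left-hand side, where two quantities---the elastic energy $\tfrac{1}{2}\|\eps(\eu^{k+1})\|^2+\tfrac{\lambda}{2}\|\divv\eu^{k+1}\|^2$ and the weighted Darcy dissipation $\|\R_v^{\frac{1}{2}}\ev^{k+1}\|^2$---have been replaced by lower bounds written purely in terms of the pressure error $\ep^{k+1}$. Accordingly, the plan is to derive these two lower bounds separately and then substitute them into~\eqref{3last}, leaving the right-hand side of Lemma~\ref{alg3le1} untouched.

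The velocity-side bound is immediate. Under the design choice $S=M^{-1}$ fixed just before~\eqref{defS}, estimate~\eqref{u3} already supplies
\[
\|\R_v^{\frac{1}{2}}\ev^{k+1}\|^2 \;\geq\; \beta_d^2\,\|\A_3^{\frac{1}{2}}\ep^{k+1}\|^2,
\]
which is exactly the second replacement needed.

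The elastic-side bound is the substantive step, and I would extract it from the displacement error equation~\eqref{uzawaq1a} combined with the $H^1$ inf-sup of Lemma~\ref{lemma1}. Setting $\sigma := \sum_{i=1}^{n} e_{p_i}^{k+1}$, so that $\|\sigma\|^2 = \lambda_0\|\A_4^{\frac{1}{2}}\ep^{k+1}\|^2$, Lemma~\ref{lemma1} in its standard surjective reformulation furnishes $\bw\in\bU$ with $\divv\bw = \sigma$ and $\|\bw\|_1 \leq \beta_s^{-1}\|\sigma\|$. Testing~\eqref{uzawaq1a} with this $\bw$ collapses its right-hand side to $(\ep^{k+1},\Divu\bw)=\|\sigma\|^2$, while the left-hand side is bounded by the two-vector weighted Cauchy--Schwarz inequality:
\[
\|\sigma\|^2 \;\leq\; \sqrt{\|\eps(\eu^{k+1})\|^2 + \lambda\|\divv\eu^{k+1}\|^2}\;\sqrt{\|\eps(\bw)\|^2 + \lambda\|\divv\bw\|^2}.
\]
Using $\|\eps(\bw)\|\leq\|\bw\|_1\leq \beta_s^{-1}\|\sigma\|$ and $\|\divv\bw\| = \|\sigma\|$, the second factor is at most $\sqrt{\beta_s^{-2}+\lambda}\,\|\sigma\|$, so squaring and halving yields
\[
\tfrac{\lambda_0}{2(\beta_s^{-2}+\lambda)}\|\A_4^{\frac{1}{2}}\ep^{k+1}\|^2 \;\leq\; \tfrac{1}{2}\bigl(\|\eps(\eu^{k+1})\|^2 + \lambda\|\divv\eu^{k+1}\|^2\bigr).
\]

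Substituting these two lower bounds into~\eqref{3last} reproduces the claimed inequality verbatim. The main obstacle is securing the precise additive constant $\beta_s^{-2}+\lambda$ rather than a looser one: a direct use of the inf-sup alone supplies only $(\divv\bw,\sigma)\geq \beta_s\|\bw\|_1\|\sigma\|$, which does not package $\beta_s^{-2}$ and $\lambda$ additively under a single square root. It is the combination of choosing $\bw$ as the norm-controlled right-inverse of $\divv$ applied to $\sigma$ (so that $\divv\bw=\sigma$ holds exactly) together with the \emph{weighted} two-vector Cauchy--Schwarz applied to the identity from~\eqref{uzawaq1a} that delivers the clean constant $\beta_s^{-2}+\lambda$ needed on the left-hand side.
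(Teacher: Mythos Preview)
Your proposal is correct and follows essentially the same route as the paper: you invoke~\eqref{u3} for the velocity term and, for the elastic term, construct via Lemma~\ref{lemma1} a test function $\bw$ with $\divv\bw$ equal to (a scalar multiple of) $\sum_i e_{p_i}^{k+1}$, then apply weighted Cauchy--Schwarz to~\eqref{uzawaq1a} to obtain~\eqref{3u4}, and finally substitute both bounds into~\eqref{3last}. The only cosmetic difference is that the paper rescales the divergence target by $1/\sqrt{\lambda_0}$ when building $\bw_0$, whereas you absorb that factor afterward; the resulting constants are identical.
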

\begin{proof}
From Lemma~\ref{lemma1}, it follows that for all $\sum_{i=1}^{n}\be_{p_i}^{k+1} \in P_i$ there exists 
$\bw_0 \in \bU$ such that $\divv\bw_0 = \frac{1}{\sqrt{\lambda_0}}\sum_{i=1}^{n}\be_{p_i}^{k+1}$ and 
$ \|\bw_0\|_{1} \leq  \beta_{s}^{-1}\frac{1}{\sqrt{\lambda_0}}\|\sum_{i=1}^{n}\be_{p_i}^{k+1} \|
=\beta_{s}^{-1}\|\A_4^{\frac{1}{2}}\ep^{k+1}\|$. 
Also,
\begin{align}
\Divu\bw_0=\left( \begin{array}{c}\frac{1}{\sqrt{\lambda_0}}\sum_{i=1}^{n}\be_{p_i}^{k+1}\\\vdots\\\frac{1}{\sqrt{\lambda_0}}\sum_{i=1}^{n}\be_{p_i}^{k+1}\end{array}\right)=\sqrt{\lambda_0}\A_4\ep^{k+1}.\nonumber
\end{align}
Setting $\bw=\bw_0$ in \eqref{uzawaq1a}, it follows that
\begin{align*}
\sqrt{\lambda_0}\|\A_4^{\frac{1}{2}}\ep^{k+1}\|^2&=(\eps(\eu^{k+1}), \eps(\bw_0))+ \lambda  (\divv\eu^{k+1},\divv \bw_0)\leq (\|\eps(\eu^{k+1})\|^2+\lambda \|\divv\eu^{k+1}\|^2)^\frac{1}{2}\cdot(\| \eps(\bw_0)\|^2+\lambda\|\divv \bw_0\|^2)^\frac{1}{2}\\ 
&\leq  (\|\eps(\eu^{k+1})\|^2+\lambda \|\divv\eu^{k+1}\|^2)^\frac{1}{2}\cdot(\beta_{s}^{-2}\|\A_4^{\frac{1}{2}}\ep^{k+1}\|^2+\lambda\|\A_4^{\frac{1}{2}}\ep^{k+1}\|^2)^\frac{1}{2}\\&=(\|\eps(\eu^{k+1})\|^2+\lambda \|\divv\eu^{k+1}\|^2)^\frac{1}{2}\cdot(\beta_{s}^{-2}+\lambda)^\frac{1}{2} \|\A_4^{\frac{1}{2}}\ep^{k+1}\|
\end{align*}
and, therefore,
\begin{align}
\frac{\lambda_0}{\beta_{s}^{-2}+\lambda}\|\A_4^{\frac{1}{2}}\ep^{k+1}\|^2 \leq
	\|\eps(\eu^{k+1})\|^2 + \lambda \|\divv\eu^{k+1}\|^2.
	\label{3u4}
\end{align}
Using \eqref{3u4} and \eqref{u3} in \eqref{3last}, we have
\begin{align}
&\frac{\lambda_0}{2(\beta_s^{-2}+\lambda)}\|\A_4^{\frac{1}{2}}\ep^{k+1}\|^2+
\text{$\beta_d^2\|\A_3^{\frac{1}{2}}\ep^{k+1}\|^2$}+\|(\A_1 +\A_2)^{\frac{1}{2}}\ep^{k+1}\|^2
+\frac{L_1}{2}\|\A_3^{\frac{1}{2}}\ep^{k+1}\|^2+ \frac{L_2}{2} \|\A_4^{\frac{1}{2}}\ep^{k+1}\|^2\nonumber\\
&\leq\frac{L_1}{2}\|\A_3^{\frac{1}{2}}\ep^k\|^2+\frac{L_2}{2}\|\A_4^{\frac{1}{2}}\ep^{k}\|^2
+\left(\frac{\lambda_0}{2(c_K^2+\lambda)}-\frac{L_2}{2}-\frac{L_1R\lambda_0}{2n}\right)
\|\A_4^{\frac{1}{2}}(\ep^{k+1}-\ep^{k})\|^2.\label{3last1}
\end{align}
\end{proof}
The following two theorems present the main convergence results for Algorithm~\ref{alg1}.
\begin{theorem}\label{Theorem3} 
	Consider Algorithm~\ref{alg1}. For any $\theta>0$ and 
	$L_2\geq \frac{\lambda_0}{(c_K^2+\lambda)\left(1+\frac{\theta\beta_d^2R\lambda_0}{n}\right)}$,  
	$L_1=\theta\beta_d^2L_2$, the errors $ \be_{\bp}^k$ defined in \eqref{error} satisfy the estimate:
	\begin{equation}\label{theoep}
	\|\ep^{k+1}\|^2_{\bP_{\theta}} \leq \mathrm{rate}^2(\lambda, R,\theta) \|\ep^{k}\|^2_{\bP_{\theta}}	
	\end{equation}
	with 
	\begin{equation*}\label{constant_P}
	\mathrm{rate}^2(\lambda, R,\theta)\leq \frac{1}{C+1},\, 
	C=\min \left\{ \frac{\lambda_0}{\beta_s^{-2}+\lambda}, 2\theta^{-1}\right\}L_2^{-1} 
	\end{equation*}
	and
	\begin{equation}\label{norm_Ptheta}
	\|\ep^{k+1}\|^2_{\bP_{\theta}}
	 :=\|\A_4^{\frac{1}{2}}\ep^{k+1}\|^2+\theta\beta_d^2\text{$\|\A_3^{\frac{1}{2}}\ep^{k+1}\|^2$}+ \|(\A_1 +\A_2)^{\frac{1}{2}}\ep^{k+1}\|^2.
	 \end{equation}
	\begin{enumerate}
	\item For $\theta=\theta_0:=\beta_d^{-2}$ and $L_2= \frac{\lambda_0}{(c_K^2+\lambda)\left(1+\frac{R\lambda_0}{n}\right)}$, 
	we obtain the convergence factor under the norm 
	$\|\cdot\|_{\bm P_{\theta_0}}$ estimated by 
	$$
	\mathrm{rate}^2(\lambda, R)\leq \frac{1}{\frac{c_0(c_K^2+\lambda)\left(1+\frac{\lambda_0R}{n}\right)}{\lambda_0}+1} \leq \max\left\{\frac{1}{c_0+1},\frac{1}{c_0c_K^2+1},\frac{1}{2}\right\},
	\hbox{where}~c_0=\min \left\{ \frac{\lambda_0}{\beta_s^{-2}+\lambda}, 2\beta_d^2 \right\}.
	$$\label{alg3th0}
	Here for any $\bx\in \bP$
	$$
	\|\bx\|_{\bm P_{\theta_0}}^2 : = \|\A_3^{\frac{1}{2}}\bx\|^2
	+\text{$\|\A_4^{\frac{1}{2}}\bx \|^2$}+ \|(\A_1 +\A_2)^{\frac{1}{2}}\bx\|^2.
	$$
\item For the best choice 
	$\theta=\theta_*:=\frac{2(\beta_s^{-2}+\lambda)}{\lambda_0}$
	and 
	$L_2= \frac{\lambda_0}{(c_K^2+\lambda)\left(1+\frac{2\beta_d^2(\beta_s^{-2}+\lambda)R}{n}\right)}$, 
	the errors $ \be_{\bp}^k$ satisfy the estimate 
	\begin{align}\label{alg3th1}
	\|\ep^{k+1}\|^2_{\bP_{\theta_*}} \le
	\mathrm{rate}^2(\lambda, R)\leq \frac{1}{\frac{(c_K^2+\lambda)
	\left(1+\frac{2\beta_d^2(\beta_s^{-2}+\lambda)R}{n}\right)}{(\beta_s^{-2}+\lambda)}+1}\leq 
	\max\left\{\frac{\beta_s^{-2}}{c_K^2+\beta^{-2}_s},\frac{1}{2}\right\},
	\end{align}
	where 
	\begin{equation}\label{norm_Pv}
	\|\ep^{k+1}\|^2_{\bP_{\theta_\ast}}
	:= \|\A_4^{\frac{1}{2}}\ep^{k+1}\|^2+\frac{2(\beta_s^{-2}+\lambda)\beta_v^2}{\lambda_0}
	\text{$\|\A_3^{\frac{1}{2}}\ep^{k+1}\|^2$}+ \|(\A_1 +\A_2)^{\frac{1}{2}}\ep^{k+1}\|^2.
	 \end{equation}
	\end{enumerate}
\end{theorem}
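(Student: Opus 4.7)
My plan is to feed the preliminary pressure estimate from Lemma~\ref{alg3le2} through the specific parameter choices in the hypotheses and extract a contraction in the $\bP_\theta$ norm by a careful weighted comparison of coefficients. The proof splits into three conceptual steps followed by two corollary calculations for $\theta_0$ and $\theta_*$.

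First, I would exploit the parameter constraint to simplify Lemma~\ref{alg3le2}. With $L_1=\theta\beta_d^2 L_2$ and $L_2 \ge \frac{\lambda_0}{(c_K^2+\lambda)(1+\theta\beta_d^2 R\lambda_0/n)}$, a direct computation shows
$\frac{\lambda_0}{2(c_K^2+\lambda)} - \frac{L_2}{2} - \frac{L_1 R\lambda_0}{2n} \le 0$,
so the term $\bigl(\frac{\lambda_0}{2(c_K^2+\lambda)}-\frac{L_2}{2}-\frac{L_1R\lambda_0}{2n}\bigr)\|\A_4^{1/2}(\ep^{k+1}-\ep^k)\|^2$ on the right-hand side of Lemma~\ref{alg3le2} is non-positive and may be discarded. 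Next, I substitute $L_1=\theta\beta_d^2 L_2$ into the remaining right-hand side to group it as $\frac{L_2}{2}\bigl(\|\A_4^{1/2}\ep^k\|^2+\theta\beta_d^2\|\A_3^{1/2}\ep^k\|^2\bigr)$ and bound this above by $\frac{L_2}{2}\|\ep^k\|^2_{\bP_\theta}$ using $\|(\A_1+\A_2)^{1/2}\ep^k\|^2\ge 0$.

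Second, I would produce the lower bound
$\text{LHS of Lemma~\ref{alg3le2}} \ge \frac{L_2(1+C)}{2}\|\ep^{k+1}\|^2_{\bP_\theta}$
by combining three coefficient estimates. The choice $C=\min\bigl\{\frac{\lambda_0}{\beta_s^{-2}+\lambda},\,2\theta^{-1}\bigr\}L_2^{-1}$ guarantees both $\frac{\lambda_0}{2(\beta_s^{-2}+\lambda)} \ge \frac{CL_2}{2}$ and $\beta_d^2 \ge \frac{CL_1}{2}$, so the $\A_4$-- and $\A_3$--contributions already give the factor $(1+C)$ on $\|\A_4^{1/2}\ep^{k+1}\|^2$ and on $\theta\beta_d^2\|\A_3^{1/2}\ep^{k+1}\|^2$. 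The $\|(\A_1+\A_2)^{1/2}\ep^{k+1}\|^2$ summand, already present with coefficient $1$ on the left, is combined with the other positive leftovers to produce the matching $(\A_1+\A_2)$ contribution in $\frac{L_2(1+C)}{2}\|\ep^{k+1}\|^2_{\bP_\theta}$. Combining the two bounds and dividing by $L_2/2$ yields $(1+C)\|\ep^{k+1}\|^2_{\bP_\theta} \le \|\ep^k\|^2_{\bP_\theta}$, that is $\mathrm{rate}^2 \le \frac{1}{1+C}$.

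For the corollaries, I would substitute the specific parameters and simplify. In case (1) with $\theta_0=\beta_d^{-2}$ and the minimal admissible $L_2$, one gets $1+C = 1 + \frac{c_0(c_K^2+\lambda)(1+\lambda_0 R/n)}{\lambda_0}$ with $c_0=\min\{\frac{\lambda_0}{\beta_s^{-2}+\lambda},2\beta_d^2\}$; splitting into $\lambda\ge 1$ (so $\lambda_0=\lambda$ and $\frac{c_K^2+\lambda}{\lambda_0}\ge 1$) versus $\lambda<1$ (so $\lambda_0=1$ and $\frac{c_K^2+\lambda}{\lambda_0}\ge c_K^2$) delivers the three-way maximum. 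In case (2) the definition $\theta_*=\frac{2(\beta_s^{-2}+\lambda)}{\lambda_0}$ makes $\frac{2}{\theta_*}=\frac{\lambda_0}{\beta_s^{-2}+\lambda}$, so both arguments of the minimum in $C$ coincide and $C$ simplifies to $\frac{\lambda_0}{L_2(\beta_s^{-2}+\lambda)}$, producing the cleaner two-way bound stated in \eqref{alg3th1}.

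The principal obstacle will be the alignment in step two of the coefficient $1$ in front of $\|(\A_1+\A_2)^{1/2}\ep^{k+1}\|^2$ on the left of Lemma~\ref{alg3le2} with the coefficient $\frac{L_2}{2}$ that this same quantity carries inside $\frac{L_2(1+C)}{2}\|\ep^{k+1}\|^2_{\bP_\theta}$: the two coefficients need not match, and I must be careful to orchestrate the absorption of the $(\A_1+\A_2)$-slack (together with the positive leftovers from the $\A_3$ and $\A_4$ estimates) into the contraction bookkeeping so that no extra factor contaminates the rate $1/(1+C)$. Verifying that this bookkeeping closes, uniformly in all physical and discretization parameters, is the crux of the proof.
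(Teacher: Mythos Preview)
Your proposal follows essentially the same route as the paper's proof: apply Lemma~\ref{alg3le2}, use the parameter constraint to kill the $(\ep^{k+1}-\ep^k)$ term, substitute $L_1=\theta\beta_d^2 L_2$, and then compare coefficients in the $\bP_\theta$ norm. The organization and the derivation of the corollaries for $\theta_0$ and $\theta_*$ are also the same.

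Regarding what you flag as the ``principal obstacle'': there is no elaborate orchestration of slack needed. The paper simply observes that $L_2\le 1$ (stated explicitly in the proof), and since $CL_2=\min\bigl\{\tfrac{\lambda_0}{\beta_s^{-2}+\lambda},\,2\theta^{-1}\bigr\}\le\tfrac{\lambda_0}{\beta_s^{-2}+\lambda}\le 1$, one gets $\tfrac{L_2(1+C)}{2}\le 1$, so the coefficient $1$ in front of $\|(\A_1+\A_2)^{1/2}\ep^{k+1}\|^2$ dominates its share of $\tfrac{L_2(1+C)}{2}\|\ep^{k+1}\|^2_{\bP_\theta}$. Equivalently, in the paper's presentation, one checks that $\min\bigl\{\tfrac{\lambda_0}{2(\beta_s^{-2}+\lambda)}+\tfrac{L_2}{2},\,\theta^{-1}+\tfrac{L_2}{2}\bigr\}\le 1$, so the $(\A_1+\A_2)$ term can be absorbed into the $\min$-weighted $\bP_\theta$ norm on the left. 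Your worry about leftover slack from the $\A_3$ and $\A_4$ estimates is misplaced: those bounds are tight at the minimum, and the $(\A_1+\A_2)$ bookkeeping closes by the single inequality $L_2\le 1$.
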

\begin{proof}
In view of the estimate presented in Lemma~\ref{lemma_errors_p}, we want to find $L_2$ and $L_1$ subject to the condition 
\begin{equation}\label{condition}
\frac{\lambda_0}{2(c_K^2+\lambda)}-\frac{L_2}{2}-\frac{L_1R\lambda_0}{2n}\leq 0.
\end{equation}
For any $\theta>0$, we rewrite~\eqref{3last1} as 
\begin{align}
&\frac{\lambda_0}{2(\beta_s^{-2}+\lambda)}\|\A_4^{\frac{1}{2}}\ep^{k+1}\|^2+
\theta^{-1}\theta\text{$\beta_d^2\|\A_3^{\frac{1}{2}}\ep^{k+1}\|^2$}+ \|(\A_1 +\A_2)^{\frac{1}{2}}\ep^{k+1}\|^2
+\frac{L_1}{2\theta\beta_d^2}\theta\beta_d^2\|\A_3^{\frac{1}{2}}\ep^{k+1}\|^2
+ \frac{L_2}{2} \|\A_4^{\frac{1}{2}}\ep^{k+1}\|^2\nonumber
\\ &\qquad \leq \frac{L_1}{2\theta\beta_d^2}\theta\beta_d^2\|\A_3^{\frac{1}{2}}\ep^k\|^2
+\frac{L_2}{2}\|\A_4^{\frac{1}{2}}\ep^{k}\|^2+\left(\frac{\lambda_0}{2(c_K^2+\lambda)}
-\frac{L_2}{2}-\frac{L_1R\lambda_0}{2n}\right)\|\A_4^{\frac{1}{2}}(\ep^{k+1}-\ep^{k})\|^2\label{theta},
\end{align}
namely, 
\begin{align}
&\left(\frac{\lambda_0}{2(\beta_s^{-2}+\lambda)}+\frac{L_2}{2}\right)\|\A_4^{\frac{1}{2}}\ep^{k+1}\|^2+
\left(\theta^{-1}+\frac{L_1}{2\theta\beta_d^2}\right)\theta\text{$\beta_d^2\|\A_3^{\frac{1}{2}}\ep^{k+1}\|^2$}
+ \|(\A_1 +\A_2)^{\frac{1}{2}}\ep^{k+1}\|^2\nonumber
\\ & \qquad\leq\frac{L_1}{2\theta\beta_d^2}\theta\beta_d^2\|\A_3^{\frac{1}{2}}\ep^k\|^2+\frac{L_2}{2}\|\A_4^{\frac{1}{2}}\ep^{k}\|^2.\label{theta1}
\end{align}
Then, for $L_2 \leq 1$ we obtain 
\begin{align}
&\min\left\{\frac{\lambda_0}{2(\beta_s^{-2}+\lambda)}+\frac{L_2}{2}, \theta^{-1}+\frac{L_1}{2\theta\beta_d^2}\right\} \left(\|\A_4^{\frac{1}{2}}\ep^{k+1}\|^2+
\theta\text{$\beta_d^2\|\A_3^{\frac{1}{2}}\ep^{k+1}\|^2$}+ \|(\A_1 +\A_2)^{\frac{1}{2}}\ep^{k+1}\|^2\right)\nonumber\\
&\qquad\leq\max\left\{\frac{L_1}{2\theta\beta_d^2},\frac{L_2}{2}\right\}\left(\theta\beta_d^2\|\A_3^{\frac{1}{2}}\ep^k\|^2+\|\A_4^{\frac{1}{2}}\ep^{k}\|^2\right).\label{theta2}
\end{align}
Now, choose $L_1= \theta\beta_d^2L_2$. Then, condition~\eqref{condition} becomes 
\begin{equation}\label{condition1}
\frac{\lambda_0}{2(c_K^2+\lambda)}-\frac{L_2}{2}-\frac{\theta\beta_d^2L_2R\lambda_0}{2n}\leq 0~~ \text{or}~~  L_2\geq \frac{\frac{\lambda_0}{(c_K^2+\lambda)}}{1+\frac{\theta\beta_d^2R\lambda_0}{n}}
\end{equation}
and we can simplify \eqref{theta2} as follows 
\begin{align*}
&\min\left\{\frac{\lambda_0}{2(\beta_s^{-2}+\lambda)}+\frac{L_2}{2}, \theta^{-1}+\frac{L_2}{2}\right\} \left(\|\A_4^{\frac{1}{2}}\ep^{k+1}\|^2+
\theta\text{$\beta_d^2\|\A_3^{\frac{1}{2}}\ep^{k+1}\|^2$}+ \|(\A_1 +\A_2)^{\frac{1}{2}}\ep^{k+1}\|^2\right)\nonumber\\
&\qquad\leq\frac{L_2}{2}\left(\theta\beta_d^2\|\A_3^{\frac{1}{2}}\ep^k\|^2+\|\A_4^{\frac{1}{2}}\ep^{k}\|^2\right),\label{theta3}
\end{align*}
which shows~\eqref{theoep}.
Statements 1. and 2. are direct consequences of~\eqref{theoep} for the particular choices of $\theta$ in the corresponding norms.

\end{proof}
Note that estimate~\eqref{alg3th1} not only indicates that the convergence rate of the Uzawa-type iterative method has a
uniform, with respect to the parameters, upper-bound being strictly less than $1$, but also that it is bounded by a number much smaller than $1$ if $\lambda$ is large.
Moreover, the presented analysis of the Uzawa-type scheme results in a new, parameter-optimized block-triangular preconditioner that can be
used to accelerate the convergence of the GMRES method if the latter is applied to the augmented system~\eqref{Augweak}.
The parameter-robust uniform convergence estimates for the new Uzawa-type method imply the field-of-values equivalence of this preconditioner
for the augmented system.

\begin{theorem}\label{theorem_errors_vu}
	Consider Algorithm~\ref{alg1} with $S$ as introduced in~\eqref{defS}. Then the errors $\be_{\bu}^k$ and $\be_{\bv}^k$ 
	defined in \eqref{error} satisfy the estimates:
	\begin{equation}
	\|\eu^{k}\|_{\bU} \leq C_u [\mathrm{rate}(\lambda,R)]^k,\quad \|\ev^{k}\|_{\bV_{\theta_*}} \leq C_v [\mathrm{rate}(\lambda,R)]^k
	\end{equation}
	where 
	\begin{equation}\label{norms_vu}
	\|\ev^{k}\|_{\bV_{\theta_*}}^2:=(\R_v\ev^{k},\ev^{k}) + (S^{-1}\mathrm{Div} \ev^{k},\mathrm{Div} \ev^{k}),\quad
	\|\bu\|^2_{\bU}  := \|\eps(\bu)\|^2 + \lambda\|\mathrm{div}\bu\|^2
	\end{equation} 
	and the constants $C_u$ and $C_v$ are independent of 
	the model parameters and the time step size.\label{alg3th2}
\end{theorem}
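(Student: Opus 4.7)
Theorem~\ref{Theorem3} already provides the geometric decay $\|\ep^k\|_{\bP_{\theta_*}} \le [\mathrm{rate}(\lambda,R)]^k \|\ep^0\|_{\bP_{\theta_*}}$. My strategy is to bound $\|\eu^k\|_{\bU}$ and $\|\ev^k\|_{\bV_{\theta_*}}$ linearly by $\|\ep^{k}\|_{\bP_{\theta_*}}$ and $\|\ep^{k-1}\|_{\bP_{\theta_*}}$ (together with $\|\eu^{k-1}\|_{\bU}$ for the velocity) using the error equations~\eqref{uzawaq1a} and~\eqref{uzawaq1c}, with constants independent of the physical and discretization parameters, and then to insert the pressure-error bound.

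\textbf{Displacement bound.} Testing~\eqref{uzawaq1a} with $\bw = \eu^{k+1}$ and applying Cauchy--Schwarz gives
\[
\|\eu^{k+1}\|^2_{\bU} = (\ep^{k+1},\Divu\eu^{k+1}) \le \sqrt{\lambda_0}\,\|\A_4^{1/2}\ep^{k+1}\|\,\|\divv\eu^{k+1}\|.
\]
The Korn-type estimate $(c_K^2+\lambda)\|\divv\eu^{k+1}\|^2 \le \|\eu^{k+1}\|^2_{\bU}$ derived from~\eqref{ck} absorbs $\|\divv\eu^{k+1}\|$ on the right, yielding $\|\eu^{k+1}\|_{\bU}\le \sqrt{\lambda_0/(c_K^2+\lambda)}\,\|\A_4^{1/2}\ep^{k+1}\| \le c_K^{-1}\|\ep^{k+1}\|_{\bP_{\theta_*}}$ with a parameter-uniform prefactor.

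\textbf{Velocity bound.} Solving~\eqref{uzawaq1c} for $\Divv\ev^{k+1}$ gives the identity $\Divv\ev^{k+1} = S(\ep^k-\ep^{k+1}) - \Divu\eu^k - (\A_1+\A_2)\ep^k$, hence by the triangle inequality in the $S^{-1}$-weighted norm,
\[
\|S^{-1/2}\Divv\ev^{k+1}\| \le \|S^{1/2}\ep^{k+1}\|+\|S^{1/2}\ep^k\|+\|S^{-1/2}\Divu\eu^k\|+\|S^{-1/2}(\A_1+\A_2)\ep^k\|.
\]
Three of the four terms are straightforward: $\|S^{1/2}\ep\|^2 \le C\|\ep\|^2_{\bP_{\theta_*}}$ by comparing the weights of $\A_3,\A_4,\A_1+\A_2$ term-by-term and using $L_2\le c_K^{-2}$ together with $L_1 = \theta_*\beta_d^2 L_2$, while the operator inequality $S \ge \A_1+\A_2$ gives $\|S^{-1/2}(\A_1+\A_2)\ep^k\| \le \|(\A_1+\A_2)^{1/2}\ep^k\|\le \|\ep^k\|_{\bP_{\theta_*}}$. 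The critical estimate is on $\|S^{-1/2}\Divu\eu^k\|$: since $\Divu\eu = \be\,\divv\eu$ with $\be=(1,\dots,1)^T$, the pointwise identity $\|S^{-1/2}\Divu\eu\|^2 = (\be^TS^{-1}\be)\,\|\divv\eu\|^2$ holds; dropping $\A_1+\A_2\ge 0$ from $S$ and applying Lemma~\ref{etauu:7} with $a=L_1 R$, $b=L_2/\lambda_0$ yields $\be^TS^{-1}\be\le n/(L_1R + nL_2/\lambda_0)$, and inserting the explicit formulas $L_2 = \lambda_0/[(c_K^2+\lambda)(1+2\beta_d^2(\beta_s^{-2}+\lambda)R/n)]$ and $L_1 = \theta_*\beta_d^2 L_2$ from Theorem~\ref{Theorem3} collapses the denominator to $n/(c_K^2+\lambda)$, giving $\be^TS^{-1}\be\le c_K^2+\lambda$ and hence $\|S^{-1/2}\Divu\eu^k\| \le \|\eu^k\|_{\bU}$ by another application of~\eqref{ck}. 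Finally, from~\eqref{3u1} and Young's inequality,
\[
\|\R_v^{1/2}\ev^{k+1}\|^2 = (\ep^{k+1},\Divv\ev^{k+1}) \le \tfrac12\|S^{1/2}\ep^{k+1}\|^2 + \tfrac12\|S^{-1/2}\Divv\ev^{k+1}\|^2,
\]
whence $\|\ev^{k+1}\|^2_{\bV_{\theta_*}}\lesssim \|\ep^{k+1}\|^2_{\bP_{\theta_*}} + \|\ep^k\|^2_{\bP_{\theta_*}} + \|\eu^k\|^2_{\bU}$.

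\textbf{Conclusion and main obstacle.} Substituting the displacement bound and invoking Theorem~\ref{Theorem3} produces $\|\ev^{k+1}\|_{\bV_{\theta_*}}\le C[\mathrm{rate}(\lambda,R)]^k\|\ep^0\|_{\bP_{\theta_*}}$, and an index shift delivers the claim with constants $C_u,C_v$ depending only on $c_K,\beta_s,\beta_d,n$ and on the initial error. The main obstacle is precisely the estimate on $\|S^{-1/2}\Divu\eu^k\|$: a priori the scalar $\be^TS^{-1}\be$ could deteriorate as $R\to 0$ or $\lambda\to\infty$, and its uniform bound is a direct consequence of the algebraic cancellation $L_1R + nL_2/\lambda_0 = n/(c_K^2+\lambda)$, which holds exactly because the choices of $L_1$ and $L_2$ made in Theorem~\ref{Theorem3} are calibrated to balance the two contributions.
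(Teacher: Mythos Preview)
Your proof is correct, and the displacement estimate is identical to the paper's. For the velocity estimate you take a slightly different route. The paper tests the flux error equation~\eqref{uzawaq1b} with $\bz=\ev^{k+1}$; since $M=S^{-1}$, this produces the full norm $(\R_v\ev^{k+1},\ev^{k+1})+(S^{-1}\Divv\ev^{k+1},\Divv\ev^{k+1})$ on the left in a single stroke, and the right-hand side is then rewritten as $(S^{-1}(L_1\A_3+L_2\A_4)\ep^{k},\Divv\ev^{k+1})-(S^{-1}\Divu\eu^{k},\Divv\ev^{k+1})$ before applying Cauchy's inequality. You instead treat the two components of $\|\ev^{k+1}\|_{\bV_{\theta_*}}$ separately: first solving the pressure error equation~\eqref{uzawaq1c} for $\Divv\ev^{k+1}$ and bounding $\|S^{-1/2}\Divv\ev^{k+1}\|$ by the triangle inequality, then invoking the identity~\eqref{3u1} and Young's inequality for the $A_v$-part. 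Both routes hinge on exactly the same critical ingredient, namely the bound $\|S^{-1/2}\Divu\eu^{k}\|^2\le (c_K^2+\lambda)\|\divv\eu^{k}\|^2$ obtained from Lemma~\ref{etauu:7} and the algebraic cancellation $L_1R+nL_2/\lambda_0=n/(c_K^2+\lambda)$, which you correctly identify as the main obstacle. The paper's path is a bit more economical (one test function, no need to separately control $\|S^{1/2}\ep^{k+1}\|$ or $\|S^{-1/2}(\A_1+\A_2)\ep^{k}\|$), while yours makes the dependence on the individual error-equation contributions more explicit; the resulting constants are of the same order.
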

\begin{proof}
First, we estimate $\|\eu^{k+1}\|^2_{\bU}$. By setting $\bw=\eu^{k+1}$ in \eqref{uzawaq1a}, applying Cauchy's inequality 
and using~\eqref{3korn} 
we obtain
\begin{align*}
\|\eps(\eu^{k+1})\|^2 + \lambda \|\divv\eu^{k+1}\|^2 &=\left(\sum_{i=1}^{n}\be^{k+1}_{p_i},\divv\eu^{k+1}\right) \leq \left\|\sum_{i=1}^{n}\be^{k+1}_{p_i}\right\|\cdot\|\divv\eu^{k+1}\|
=\sqrt{\lambda_0}\|\A_4^{\frac{1}{2}}\ep^{k+1}\| \cdot\|\divv\eu^{k+1}\|\\&\leq\sqrt{\lambda_0}\|\A_4^{\frac{1}{2}}\ep^{k+1}\|\cdot\sqrt{\frac{1}{c_K^2+\lambda}(\|\eps(\eu^{k+1})\|^2 + \lambda \|\divv\eu^{k+1}\|^2)},
\end{align*}
or, equivalently, 
\begin{equation}\label{erroru}
\|\eu^{k+1}\|^2_{\bU} \leq\frac{\lambda_0}{c_K^2+\lambda}\|\A_4^{\frac{1}{2}}\ep^{k+1}\|^2 \leq\frac{\lambda_0}{c_K^2+\lambda}\|\ep^{k+1}\|^2_{\bP_\theta}.
\end{equation}
In order to estimate $\|\ev^{k+1}\|^2_{\bV_{\theta_*}}$ 
we set $\bz=\ev^{k+1}$ in~\eqref{uzawaq1b} and apply the Cauchy inequality to derive
\begin{align}
(\R_v\ev^{k+1},\ev^{k+1}) + (S^{-1}&\Divv \ev^{k+1},\Divv \ev^{k+1}) = (\ep^{k},\Divv \ev^{k+1}) 
- (S^{-1}\Divu \eu^{k},\Divv \ev^{k+1})  - (S^{-1}(\A_1+\A_2)\ep^k,\Divv \ev^{k+1})\nonumber \\
&= (S^{-1}(L_1\A_3 + L_2\A_4)\ep^{k},\Divv \ev^{k+1}) - (S^{-1}\Divu \eu^{k},\Divv \ev^{k+1}) \nonumber \\
&\leq (S^{-1}(L_1\A_3 + L_2\A_4)\ep^{k},(L_1\A_3 + L_2\A_4)\ep^{k})\nonumber\\&
+\frac{1}{4}(S^{-1}\Divv \ev^{k+1},\Divv \ev^{k+1})+ (S^{-1}\Divu \eu^{k},\Divu \eu^{k}) 
+\frac{1}{4}(S^{-1}\Divv \ev^{k+1},\Divv \ev^{k+1}). \label{estimatev}
\end{align}
From the definition of $S$, see~\eqref{defS}, that of $\|\cdot\|_{\bP_{\theta_*}}$, see~\eqref{norm_Pv}, 
and noting that $L_1 =\theta\beta_d^2L_2$, see Theorem 6, 
we have  
\begin{align}
(S^{-1}(L_1\A_3 + L_2\A_4)\ep^{k},(L_1\A_3 + L_2\A_4)\ep^{k})& \leq  ((L_1\A_3 + L_2\A_4)^{-1}(L_1\A_3 + L_2\A_4)\ep^{k},(L_1\A_3 + L_2\A_4)\ep^{k})\nonumber\\
&~~=((L_1\A_3 + L_2\A_4)\ep^{k},\ep^{k})\leq L_2\|\ep^{k}\|^2_{\bP_{\theta_*}}.
\end{align}
Then~\eqref{estimatev} can be rewritten in the form
\begin{equation}
(\R_v\ev^{k+1},\ev^{k+1}) + \frac{1}{2}(S^{-1}\Divv \ev^{k+1},\Divv \ev^{k+1}) 
\leq L_2\|\ep^{k}\|^2_{\bP_{\theta_*}}+\| S^{-\frac{1}{2}}\Divu \eu^{k}\|^2.\label{3ev}
\end{equation}
Again, from the definition of $S$, and observing that 
$L_1\A_3 + L_2\A_4=(L_1R I_{n\times n}+\frac{L_2}{\lambda_0}\boldsymbol e\boldsymbol e^T)$, then by choosing $a=L_1R$ and $b=\frac{L_2}{\lambda_0}$ in Lemma~\ref{etauu:7}, it follows that 
\begin{align}
\|S^{-\frac{1}{2}}\Divu \eu^{k}\|^2 &\leq ((L_1\A_3 + L_2\A_4)^{-1} \Divu \eu^{k},\Divu \eu^{k})
=\big((\sum_{\substack{i=1}}^n\sum_{\substack{j=1}}^n b_{ij})\divv \eu^{k},\divv \eu^{k}\big)\\
&=\frac{n\lambda_0}{L_1R\lambda_0+nL_2} (\divv \eu^{k},\divv \eu^{k})\leq  (c_K^2+\lambda) (\divv \eu^{k},\divv \eu^{k}).
\end{align}
Therefore, from~\eqref{erroru}, we have 
\begin{align*}
\|\ev^{k}\|_{\bV_{\theta_*}}^2=(\R_v\ev^{k+1},\ev^{k+1}) + \frac{1}{2}(S^{-1}\Divv \ev^{k+1},\Divv \ev^{k+1}) 
&\leq L_2\|\ep^{k}\|^2_{\bP_{\theta_*}}+(c_K^2+\lambda) (\divv \eu^{k},\divv \eu^{k})\leq L_2\|\ep^{k}\|^2_{\bP_{\theta_*}}+\|\eu^k\|^2_U\\
&\leq L_2\|\ep^{k}\|^2_{\bP_{\theta_*}}+\frac{\lambda_0}{c_K^2+\lambda} \|\ep^{k}\|^2_{\bP_{\theta_*}}
=\left(L_2+\frac{\lambda_0}{c_K^2+\lambda}\right) \|\ep^{k}\|^2_{\bP_{\theta_*}},
\end{align*}
which completes the proof.
\end{proof}

\begin{remark}
Note that for the particular choice of $S$ and $M$ 
in this section, the block triangular matrix 
on the left-hand side of~\eqref{GSsystem} provides a field of values equivalent preconditioner with equivalence constants independent of any 
model and discretization parameters.
\end{remark}

\section{The discrete MPET problem}\label{sec:uni_stab_disc_model} 
Mass conservative discretizations for the MPET model are considered in this section,
cf.~\cite{HongKraus2017parameter,Hong2018conservativeMPET}. 
The analysis here can also be utilized for other stable discretizations of the three-field formulation
of the MPET model, e.g.~\cite{Hu2017nonconforming, rodrigo2018new}.

\subsection{Notation}

Let $\mathcal{T}_h$ be a shape-regular triangulation of the domain $\Omega$ into triangles/tetrahedrons
where the subscript $h$ denotes the mesh-size. 
Furthermore, let $\mathcal{E}_h^{I}$ and $\mathcal{E}_h^{B}$ define the set of all interior edges/faces and the set of all boundary edges/faces of 
$\mathcal{T}_h$ respectively with their union being written as $\mathcal{E}_h$. 

We introduce the following broken Sobolev spaces 
$$
H^s(\mathcal{T}_h)=\{\phi\in L^2(\Omega), \mbox{ such that } \phi|_T\in H^s(T) \mbox{ for all } T\in \mathcal{T}_h \}
$$ 
for $s\geq 1$.

Define $T_1$ and $T_2$ to be two elements from the triangulation which share an edge or face $e$ and $\bm n_1$ and $\bm n_2$ 
to be the corresponding unit normal vectors to $e$ 
which point to the exterior of $T_1$ and $T_2$. 
For $q\in H^1(\mathcal{T}_h)$, $\bm v \in H^1(\mathcal{T}_h)^d$ and $\bm \tau \in H^1(\mathcal{T}_h)^{d\times d}$ 
and any $e\in \mathcal{E}_h^{I}$, the jumps $[\cdot]$ and averages $\{\cdot \}$ are defined by
\begin{equation*}
[q]=q|_{\partial T_1\cap e}-q|_{\partial T_2\cap e},\quad
[\bm v]=\bm v|_{\partial T_1\cap e}-\bm v|_{\partial T_2\cap e}
\end{equation*}
and
\begin{equation*}
\begin{split}
\{\bm v\} &=\frac{1}{2}(\bm v|_{\partial T_1\cap e}\cdot \bm n_1-\bm
v|_{\partial T_2\cap e}\cdot \bm n_2), \quad 
\{\bm \tau\}=\frac{1}{2}(\bm \tau|_{\partial T_1\cap
e} \bm n_1-\bm \tau|_{\partial T_2\cap e} \bm n_2),
\end{split}
\end{equation*}
whereas in the case of  
$e \in  \mathcal{E}_h^{B}$
\[[q]=q|_{e}, ~~ [\bm v]=\bm v|_{e},\quad
\{\bm v\}=\bm v |_{e}\cdot \bm n,\quad
\{\bm \tau\}=\bm \tau|_{e}\bm n.
\]

\subsection{Mixed finite element spaces and discrete formulation}\label{subsec:DGdiscretization}
So as to discretize the flow equations, a mixed finite element method has been used to approximate fluxes and pressures. 
The displacement field of the mechanics problem is approximated using a discontinuous Galerkin method.
%
The following finite element spaces are employed:
\begin{eqnarray*}
\bm U_h&=&\{\bm u \in H(\divv ;\Omega):\bm u|_T \in \bm U(T),~T \in \mathcal{T}_h;~ \bm u \cdot
\bm n=0~\hbox{on}~\partial \Omega\},
\\[1ex]
\bm V_{i,h}&=&\{\bm v \in H(\divv ;\Omega):\bm v|_T \in \bm V_i(T),~T \in \mathcal{T}_h;~ \bm v \cdot
\bm n=0~\hbox{on}~\partial \Omega\},~~i=1,\dots,n,
\\
P_{i,h}&=&\left\{q \in L^2(\Omega):q|_T \in Q_i(T),~T \in \mathcal{T}_h; ~\int_{\Omega}q dx=0\right\},~~i=1,\dots,n,
\end{eqnarray*}
where $\bm V_{i}(T)/Q_{i}(T)={\rm RT}_{l-1}(T)/{\rm P}_{l-1}(T)$, $\bm U(T) = {\rm BDM}_l(T)$ or $\bm U(T) = {\rm BDFM}_l(T)$ for $l\ge 1$. 
One should note that $\divv  \, \bm U(T)=\divv  \, \bm V_i(T)=Q_i(T)$ for each of these choices.

Also, it has been shown in~\cite{HongKraus2017parameter,Hong2018conservativeMPET} that for all $\bm u\in \bm U_h$, 
$[\bm u_n]=0$, from which follows that $[\bm u]=[\bm u_t]$.
%
Here, $\bm u_n$ and $\bm u_t$ are the normal and tangential component of $\bm u$ respectively.

Defining
$$
\bm v_h^T=(\boldsymbol v^T_{1,h}, \cdots \boldsymbol v^T_{n,h}),\qquad \bm p_h^T=(p_{1,h},\cdots, p_{n,h}),$$ 
$$\bm z_h^T=(\boldsymbol z^T_{1,h}, \cdots \boldsymbol z^T_{n,h}),\qquad \bm q_h^T=(q_{1,h},\cdots, q_{n,h}),$$
$$
\bm V_h=\boldsymbol V_{1,h}\times\cdots\times\boldsymbol V_{n,h},
\quad \bm P_h= P_{1,h}\times\cdots\times P_{n,h}, \quad  \bm X_h = \bm U_h\times\bm V_h\times \bm P_h,
$$
then the following discretization of the continuous variational problem results from the weak formulation
of~\eqref{MPET_strong}:
Find $(\boldsymbol u_h;\bm v_h;\bm p_h )\in  \bm X_h$, such that for
any $(\boldsymbol w_h; \bm z_{h};\bm q_{h})\in  \bm X_{h}$
and $i=1,\dots, n$
\begin{subequations}\label{eq:75-77}
\begin{eqnarray}
  (R^{-1}_i\bv_{i,h},\bz_{i,h}) {-} (p_{i,h},\divv \bz_{i,h}) &=& 0,  \label{eq:76} \\
 	\hspace{5ex} -(\divv\bu_h,q_{i,h}) - (\divv\bv_{i,h},q_{i.h})  +\tilde{\alpha}_{ii} (p_{i,h},q_{i,h})
 	 +\sum_{\substack{{j=1}\\j\neq i}}^{n}\alpha_{ij} (p_{j,h},q_{i,h})&=& (g_i,q_{i,h}) ,
	 \label{eq:77}\\
	  a_h(\bm u_h,\bm w_h) +\lambda ( \divv \boldsymbol  u_h, \divv \boldsymbol  w_h)
 - \sum_{i=1}^n(p_{i,h}, \divv \boldsymbol  w_h)&=&(\boldsymbol f, \boldsymbol w_h),\label{eq:75}
\end{eqnarray}
\end{subequations}
where
\begin{eqnarray}
a_h(\bm u,\bm w)&=&\label{78}
\sum _{T \in \mathcal{T}_h} \int_T \ep(\bm{u}) :
\ep(\bm{w}) dx-\sum_{e \in \mathcal{E}_h} \int_e \{\ep(\bm{u})\} \cdot [\bm w_t] ds\\
&&\nonumber-\sum _{e \in \mathcal{E}_h} \int_e \{\ep(\bm{w})\} \cdot [\bm u_t]ds+\sum _{e
\in \mathcal{E}_h} \int_e \eta h_e^{-1}[ \bm u_t] \cdot [\bm w_t] ds,
\end{eqnarray}
$\tilde{\alpha}_{ii}=-\alpha_{p_i}-\alpha_{ii}$, and $\eta $ is a stabilization parameter which is independent of 
$\lambda,\,R_i^{-1}$, $\alpha_{p_i}$,  ${\alpha}_{ij}$, $i,j \in \{1,\dots,n\}$, the network scale $n$,
and the mesh size $h$.

In the derivation of the discrete variational problem~\eqref{eq:75-77}, homogeneous Dirichlet boundary conditions for $\bm u$
and homogeneous Neumann boundary conditions for $p_i$, $i=1,2,\ldots,n$, have been assumed for each case over the entire domain boundary.
The DG discretizations for more general (rescaled) boundary conditions and the stability analysis of the related discrete variational problems can
be found in~\cite{Hong2018conservativeMPET, HongKraus2017parameter}.
The iterative scheme for flux-pressure-displacement formulation of the discrete MPET problem, analogous to Algorithm \ref{alg1}, is as follows:
\begin{algorithm}[H]
	\caption{
	Fully decoupled iterative scheme for flux-pressure-displacement formulation of discrete MPET problem}
	\begin{algorithmic}[1]
 \Statex{\textbf{Step a:} Given $\bp_h^k$ and $\bu_h^k$, we first solve for $\bv_h^{k+1}$, such that for all ${\boldsymbol z}_h \in {\boldsymbol V}_h$ there holds}
\begin{align}
&(A_v\bv_h^{k+1},\bz_h)  +(\M\Divv \bv_h^{k+1},\Divv \bz_h) =- (\M\bg,\Divv \bz_h) + ((I- \M(\A_1+\A_2))\bp_h^k,\Divv \bz_h)-  (\M\Divu \bu_h^k,\Divv \bz_h).
\end{align}
 \Statex{\textbf{Step b:} Given $\bu_h^k$ and $\bv_h^{k+1}$, we solve for $\bp_h^{k+1}$, such that for all ${\boldsymbol q}_h \in {\boldsymbol P}_h$ there holds}
 \begin{align}
(\S\bp_h^{k+1},\bq_h)=-(\bg,\bq_h)+(\S\bp_h^k,\bq_h)- ((\A_1 +\A_2)\bp_h^k,\bq_h))-(\Divu\bu_h^k,\bq_h) - (\Divv\bv_h^{k+1},\bq_h) .
 \end{align}
 \Statex{\textbf{Step c:} Given $\bp_h^{k+1}$ and $\bv_h^{k+1}$, we solve for $\bu_h^{k+1}$, such that for all ${\boldsymbol w}_h \in {\boldsymbol U}_h$ there holds}
 \begin{align}
&a_h(\bu_h^{k+1},\bm w_h) = (\bf,\bw_h) + (\bp_h^{k+1},\Divu \bw_h).
 \end{align}
\end{algorithmic}
\label{alg2}
\end{algorithm}
In \textbf{Step a}, a coupled $H(\rm div)$ problem is solved. 
As mentioned in Remark 6 of~\cite{Hong2018conservativeMPET}, we can apply orthogonal transformations to the flux and pressure
subsystems 
which decouple the fluxes from each other and also the pressures from each other. 
For fluxes,
this procedure
results in $n$ decoupled $H(\rm div)$ problems for the operators $I+\bar{\mu}_i\nabla {\rm div}$, $i=1,2, \cdots, n$, where $\bar{\mu}_i$
are the eigenvalues of an $n{\times}n$ coefficient matrix, $n$ denoting the number of networks, i.e., $n \in \{ 1,2,4,8\}$ in the examples
presented in the next section; correspondingly, the decoupling of the pressure subsystem yields~$n$, essentially, well conditioned independent
$L^2$ problems.

There are several works addressing the solution of nearly singular $H(\rm div)$ problems and we may resort to Hiptmair-Xu preconditioners~\cite{Hiptmair.R;Xu.J2007a}
and the robust subspace correction methods \cite{xu1992iterative, xu2002method, lee2007robust, lee2008sharp}.
There also exist multigrid methods that serve this purpose, see, e.g.~\cite{vassilevski1996preconditioning,arnold1997preconditioning}. In case of highly
varying permeability (conductivity) coefficient, the auxiliary space multigrid preconditioners based on additive Schur complement approximation proposed
by Kraus et al.~\cite{kraus2016preconditioning} provide a prameter-robust alternative.

In~\textbf{Step c}, to obtain $A_u^{-1}$ for the elasticity subproblem, one can use the multigrid method 
proposed in \cite{hong2016robust} for the discontinuous
Galerkin discretization and the multigrid methods proposed in \cite{schoberl1999multigrid, lee2009robust} for conforming elements, which are all robust with
respect to the Lam\'{e} parameter $\lambda$.
Following the methodology of the convergence analysis presented for the continuous MPET problem, statements analogous to those presented
in Theorem~\ref{Theorem3} and Theorem~\ref{theorem_errors_vu} can also be proven for Algorithm~\ref{alg2}.

\section{Numerical results}\label{sec:Section_4}

In the following, we consider four numerical test settings to demonstrate the effectiveness and accuracy of the proposed 
Uzawa-type iterative schemes for the MPET model.

First, numerical results are presented for the single network problem, i.e., the Biot model, in Figure~\ref{figure_Biot}.
These validate the theoretical convergence estimates of the linear stationary iterative method based on
Algorithm~\ref{alg1} which has been additionally assessed against the preconditioned GMRES algorithm. 
In the second and third tests, 
the performance of Algorithm~\ref{alg1} is compared with the preconditioned GMRES algorithm 
and the fixed-stress algorithm as proposed in~\cite{Hong2018parameter-robust}, 
cf.~\eqref{fixed_stress_sys}, 
for the two-network and four-network MPET problems. Finally, a scaling test demonstrating the behaviour 
of the preconditioned GMRES and the Uzawa-type algorithms for different numbers of networks is performed.

The block Gauss-Seidel preconditioner that we have used to accelerate the GMRES method 
equals the lower block triangular matrix in the left-hand side of~\eqref{GSsystem} where $M=S^{-1}$ and $S$ is given in~\eqref{defS}. 

All the numerical results in this section have been conducted on the FEniCS computing platform, see e.g.~\cite{AlnaesBlechta2015a,LoggMardalEtAl2012a}. 
In all test cases the set-up is as follows:
\begin{itemize} 
\item The domain $\Omega\in \mathbb{R}^2$ is the unit square which is partioned into $2N^2$ congruent right-angled triangles; 
\vspace{1ex}
\item 
The discretization setting is the same as in~\cite{Hong2018parameter-robust,Hong2018conservativeMPET}, i.e.,
we use discontinuous piecewise constant elements, lowest-order Raviart-Thomas elements and 
Brezzi-Douglas-Marini elements
to approximate the pressures, fluxes and displacement fields respectively;
\vspace{1ex}
\item For all experiments conducted using Algorithm~\ref{alg1} we set 
$$L_2=\frac{\lambda_0}{(c_k^2+\lambda)(1+\frac{2\beta_d^2(\beta_s^{-2}+\lambda)R}{n})}, \quad
L_1=\frac{2(\beta_s^{-2}+\lambda)\beta_d^2}{\lambda_0}L_2$$ 
and 
$\beta_s^2=\beta_d^2=0.18$, see~\cite{costabel2015ontheinequalities}.
\vspace{1ex}
\item The stopping criterium of the iterative process is the reduction of the initial preconditioned residual 
by a factor~$10^8$ where a random vector has been used in the initialization. 
%
\end{itemize}

\subsection{The Biot's consolidation model}

Consider system~\eqref{eq:MPET} for $n=1$, i.e., a system for which only one pressure and one flux exists, where for $(x,y)\in \Omega$
$$
g=R_1\left(\frac{\partial \phi_2}{\partial x}+\frac{\partial \phi_2}{\partial y}\right)-\alpha_{p_1}(\phi_2-1),
$$  
$$
\phi_1=(x-1)^2(y-1)^2 x^2 y^2, \qquad
\phi_2=900(x-1)^2(y-1)^2 x^2 y^2
$$
and
$$
\bf=\left(
\begin{array}{c}
-(2y^3-3y^2+y)(12x^2-12x+2)-(x-1)^2x^2(12y-6)+900(y-1)^2y^2(4x^3-6x^2+2x)\\
~~(2x^3-3x^2+x)(12y^2-12y+2)+(y-1)^2y^2(12x-6)+900(x-1)^2x^2(4y^3-6y^2+2y)
\end{array}
\right).
$$

Experiments over a wide-range of input parameters $\alpha_p$, $\lambda$, $R_1^{-1}$ have been run with 
Algorithm~\ref{alg1} and the preconditioned GMRES algorithm and are shown in Figure~\ref{figure_Biot}. 
In all test cases, the number of Uzawa-type iterations required to achieve the prescribed solution accuracy is
bounded by a constant 
independent of all model and discretization parameters. Clearly, the GMRES preconditioned algorithm demonstrates 
better convergence behaviour for small $\lambda$. 


\begin{figure}[h!]
\includegraphics[scale=0.5]{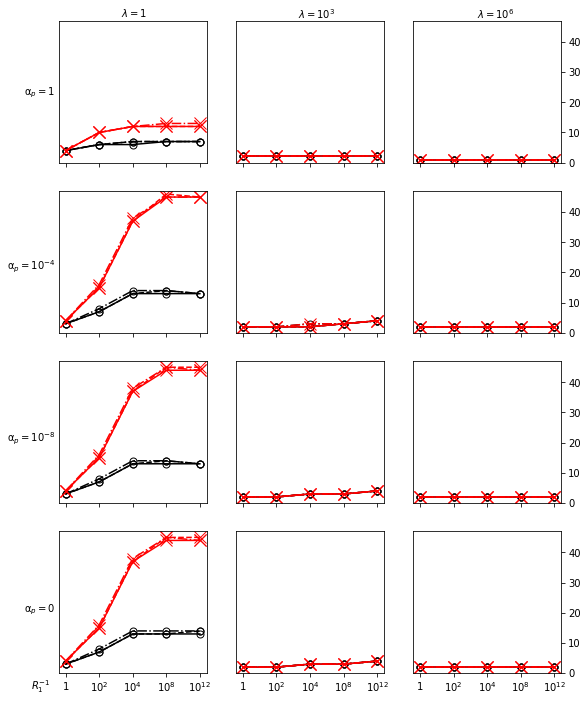}
\caption{Number of preconditioned GMRES (small black circles) and augmented Uzawa-type (red crosses) iterations 
for preconditioned residual reduction by a factor $10^8$ when solving the Biot problem. 
These tests have been performed for $h=1/32$ (dash-dotted line), $h=1/64$ (dashed line) and $h=1/128$ (full line).}
\label{figure_Biot}
\end{figure}

\subsection{The Biot-Barenblatt model}

In the next test, system~\eqref{eq:MPET} is considered for $n=2$ where the problem setting is as per the cantilever bracket benchmark problem 
in~\cite{NAFEMS1990}. 
We denote the bottom, right, top and left parts of $\Gamma=\partial \Omega$ 
by $\Gamma_1$, $\Gamma_2$, $\Gamma_3$ and $\Gamma_4$
and, also, we impose $\boldsymbol u=0$ on $\Gamma_4$,
$(\boldsymbol {\sigma} -p_1\boldsymbol I-p_2\boldsymbol I)\boldsymbol n=(0,0)^T$ on $\Gamma_1\cup\Gamma_2$, 
$(\boldsymbol {\sigma} -p_1\boldsymbol I-p_2\boldsymbol I)\boldsymbol n=(0,-1)^T$ on $\Gamma_3$, $p_1=2$ on
$\Gamma$ and $p_2=20$ on $\Gamma$. 
Further, we set $\boldsymbol f=\boldsymbol 0$, $g_1=0$ and $g_2=0$. 
Table~\ref{parameters_Barenblatt} 
shows the reference values of the model parameters as given in~\cite{Kolesov2017}. 

\begin{table}
\caption{Reference values of model parameters for the Barenblatt model.}
\label{parameters_Barenblatt}
\begin{center}
\begin{tabular}{ccc}
\hline \hline
parameter & value & unit \\ \hline \hline
$\widehat{\lambda}$ & $4.2 *10^6$ & Nm$^{-2}$ \\
$\mu$ & $2.4*10^6$ & Nm$^{-2}$ \\
$c_{p_1}$ & $5.4*10^{-8}$ & N$^{-1}$m$^2$ \\
$c_{p_2}$ & $1.4*10^{-8}$ & N$^{-1}$m$^2$\\
$\alpha_{1}$ & $0.95$ & \\
$\alpha_{2}$ & $0.12$ & \\
\multirow{2}{*}{$\beta$} & $5.0*10^{-10}$ & ~~N$^{-1}$m$^2$s$^{-1}$ \\
& $1.0*10^{-8}$ & ~~N$^{-1}$m$^2$s$^{-1}$ \\
$K_1$ & $6.18*10^{-12}$ & N$^{-1}$m$^4$s$^{-1}$ \\
$K_2$ & $2.72*10^{-11}$ & N$^{-1}$m$^4$s$^{-1}$
\end{tabular}
\end{center}
\end{table}

Figures~\ref{figure_Barenblatt1}--\ref{figure_Barenblatt3} present a comparison between the preconditioned GMRES algorithm, 
the fixed-stress split 
algorithm as presented in~\cite{Hong2018parameter-robust} with a tuning parameter $L=1/(1+\lambda)$ and Algorithm~\ref{alg1}.  
As can be seen, from Figures~\ref{figure_Barenblatt1} and \ref{figure_Barenblatt3} for $\lambda$ being sufficiently large, 
the Uzawa-type method shows similar convergence behaviour to the preconditioned GMRES and fixed-stress methods.

Furthermore, all the numerical results included in Figures~\ref{figure_Barenblatt1}--\ref{figure_Barenblatt3} demonstrate the 
robust performance of the Uzawa-type algorithm
with respect to mesh refinements and variation of the hydraulic conductivities 
$K_1$ and $K_2$, and $\lambda$. 

\begin{figure}
\includegraphics[scale=0.5]{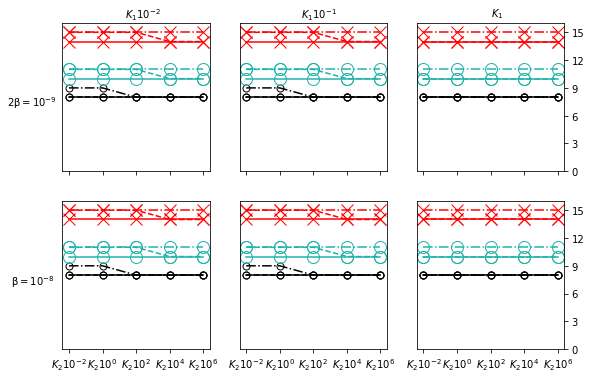}
\caption{Number of preconditioned GMRES (small black circles), fixed-stress split (big green circles) and 
augmented Uzawa-type (red crosses) iterations for preconditioned residual reduction by a factor $10^8$ when solving the 
Barenblatt problem, $\lambda = \widehat{\lambda}$. 
These tests have been performed for $h=1/16$ (dash-dotted line), $h=1/32$ (dashed line) and $h=1/64$ (full line).}
\label{figure_Barenblatt1}
\end{figure}

\begin{figure}
\includegraphics[scale=0.5]{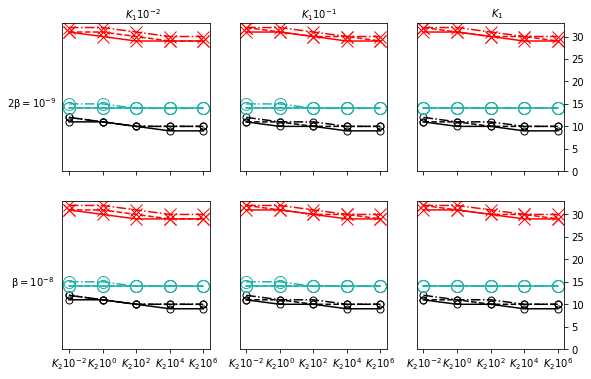}
\caption{Number of preconditioned GMRES (small black circles), fixed-stress split (big green circles) and 
augmented Uzawa-type (red crosses) iterations for preconditioned residual reduction by a factor $10^8$ when solving the 
Barenblatt problem, ${\lambda}:=0.01* \widehat{\lambda}$. 
These tests have been performed for $h=1/16$ (dash-dotted line), $h=1/32$ (dashed line) and $h=1/64$ (full line).}
\label{figure_Barenblatt2}
\end{figure}

\begin{figure}
\includegraphics[scale=0.5]{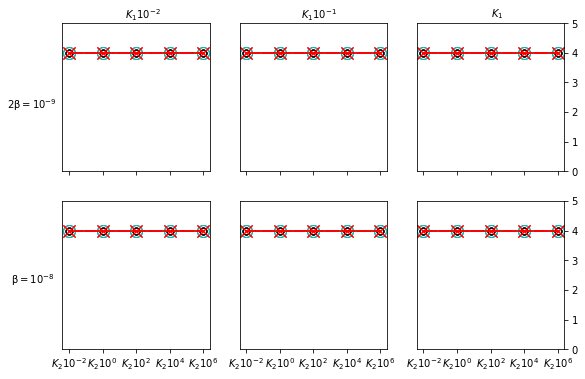}
\caption{Number of preconditioned GMRES (small black circles), fixed-stress split (big green circles) and 
augmented Uzawa-type (red crosses) iterations for preconditioned residual reduction by a factor $10^8$ when solving the 
Barenblatt problem, ${\lambda}:=100* \widehat{\lambda}$. 
These tests have been performed for $h=1/16$ (dash-dotted line), $h=1/32$ (dashed line) and $h=1/64$ (full line).}
\label{figure_Barenblatt3}
\end{figure}

\subsection{The four-network model}

Now we consider system~\eqref{eq:MPET} for $n=4$. This test setting is analogous to the previous example, i.e., 
$\partial \Omega=\bar\Gamma_1 \cup \bar\Gamma_2 \cup \bar\Gamma_3 \cup \bar\Gamma_4$ with
$\Gamma_1$, $\Gamma_2$, $\Gamma_3$, $\Gamma_4$ denoting the bottom, right, top and left boundaries respectively,
$\boldsymbol u=0$~on~$\Gamma_4$, $(\boldsymbol {\sigma} -p_1\boldsymbol I-p_2\boldsymbol I -p_3\boldsymbol I-p_4\boldsymbol I)\boldsymbol n=(0,0)^T$ 
on $\Gamma_1\cup\Gamma_2$, $(\boldsymbol {\sigma} -p_1\boldsymbol I-p_2\boldsymbol I -p_3\boldsymbol I-p_4\boldsymbol I)\boldsymbol n=(0,-1)^T$ on $\Gamma_3$, 
$p_1=2$~on~$\Gamma$, $p_2=20$ on $\Gamma$, $p_3=30$ on $\Gamma$, $p_4=40$ on $\Gamma$. All right-hand sides
have been chosen to be zero.
The reference values of the parameters are taken from~\cite{Vardakis2016investigating} and 
presented in Table~\ref{parameters_MPET4}.

The main aim of the numerical experiments discussed in this subsection is, again, the comparison 
between the three algorithms, namely the preconditioned GMRES algorithm, the fixed-stress split 
algorithm with $L=1/(1+\lambda)$ and the fully decoupling Algorithm~\ref{alg1}. 

Figure~\ref{figure_4network} shows that Algorithm~\ref{alg1} exhibits a convergence behaviour similar to that of the 
preconditioned GMRES method and the fixed-stress split iterative scheme 
over a wide-range of parameters as tabulated. 
%
Moreover, the presented numerical results demonstrate the robustness of the newly proposed algorithm
with respect to large variations of the coefficients $K_3$, $K = K_1 = K_2 = K_4$ and $\lambda$ and the mesh parameter 
$h$.

In order to further compare the performance of the preconditioned GMRES, fixed-stress 
split and augmented Uzawa-type algorithms we present one final table, Table~\ref{tab:times}, with elapsed times measured in seconds. 
These numerical tests have been conducted on a Dell Precision 5540 notebook with an Intel Core i7-9 9850H processor and 64GB RAM.
As the results indicate, the Uzawa-type method is the computationally most efficient among the three, here, clearly seen in terms of 
total solution time when direct methods are used to solve the respective subproblems. A similar behaviour can also be expected 
when iterative solvers of lower complexity replace the direct ones.

\begin{table}[h!]
\caption{Reference values of model parameters for the four-network MPET model.}
\label{parameters_MPET4}
\centering
\begin{tabular}{ccc}
\hline \hline
parameter & value & unit \\[0.0ex] \hline \hline
$\lambda$ & $505$ & Nm$^{-2}$ \\ [0.0ex]
$\mu$ & $216$ & Nm$^{-2}$ \\ [0.0ex]
$c_{p_1}=c_{p_2}=c_{p_3}=c_{p_4}$ & $4.5 *10^{-10}$ & N$^{-1}$m$^2$ \\ [0.0ex]
$\alpha_{1}=\alpha_{2}=\alpha_{3}=\alpha_{4}$  & $0.99$ &\\ [0.0ex]
$\beta_{12}=\beta_{24}$ & $1.5* 10^{-19}$ & N$^{-1}$m$^2$s$^{-1}$ \\ [0.0ex]
$\beta_{23}$ & $2.0*10^{-19}$ & N$^{-1}$m$^2$s$^{-1}$ \\[0.0ex]
$\beta_{34}$ & $1.0* 10^{-13}$ & N$^{-1}$m$^2$s$^{-1}$ \\ [0.0ex]
$K_1=K_2=K_4=K$ & $3.75*10^{-6}$ & N$^{-1}$m$^4$s$^{-1}$ \\ [0.0ex]
$K_3$ & $1.57 * 10^{-9}$ & N$^{-1}$m$^4$s$^{-1}$
\end{tabular}
\end{table}

\begin{figure}
\includegraphics[scale=0.5]{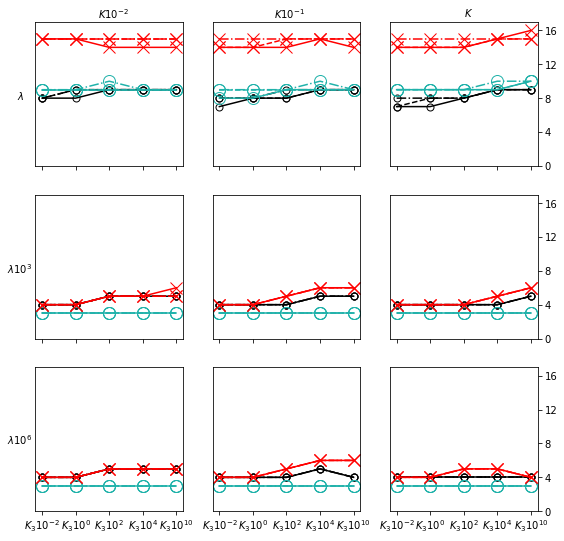}
\caption{Number of preconditioned GMRES (small black circles), fixed-stress split (big green circles) and 
augmented Uzawa-type (red crosses) iterations for preconditioned residual reduction by a factor $10^8$ when solving the 
four-network MPET problem. 
These tests have been performed for $h=1/16$ (dash-dotted line), $h=1/32$ (dashed line) and $h=1/64$ (full line).}
\label{figure_4network}
\end{figure}

\begin{table}[hbtp!]
\caption{Computational times in seconds for the preconditioned GMRES~($t_G$), fixed-stress split ($t_F$) and 
augmented Uzawa-type~($t_U$) algorithms to 
reach preconditioned residual reduction by a factor $10^8$ in
the norm induced by the preconditioner when solving the four-network MPET problem on a mesh with $h=1/64$.
}
\label{tab:times}
\centering
\resizebox{\textwidth}{!}{
\begin{tabular}{c|c||rrr|rrr|rrr|rrr|rrr}
  &  & \multicolumn{3}{c}{$K_3\cdot 10^{-2}$} & \multicolumn{3}{c}{$K_3$} &  \multicolumn{3}{c}{$K_3\cdot 10^{2}$} 
 &  \multicolumn{3}{c}{$K_3\cdot 10^{4}$}  &  \multicolumn{3}{c}{$K_3\cdot 10^{10}$} \\ \hline \hline
\rowcolor{black!10}
 & & $t_{G}$ & $t_{F}$ & $t_{U}$ & $t_{G}$ & $n_{F}$ & $t_{U}$ & $t_{G}$ & $t_{F}$ & $t_U$
& $t_{G}$ & $t_{F}$ & $t_U$ & $t_{G}$ & $t_{F}$ & $t_U$
\\ 
\hline
\multirow{3}{*}{$\lambda$}  & $K\cdot 10^{-2}$ 
                              & 15.54 &  \color{orange}{8.98} & \color{c}7.26  
                              & 15.39 & \color{orange}{9.12} & \color{c}7.21 & 15.51 & \color{orange}{9.09} & \color{c}7.16 
                              & 15.90 & \color{orange}{9.17} & \color{c}7.21 & 15.83 & \color{orange}{9.33} & \color{c}7.24 \\ [0.5ex]
                                 & $K$  & 15.32 & \color{orange}{9.20} & \color{c}7.68 & 15.60 & \color{orange}{9.13} & \color{c}7.66 
                                 & 15.40 & \color{orange}{8.91} & \color{c}7.13 
                                & 15.75 & \color{orange}{9.12} & \color{c}7.41 & 16.09 &\color{orange}{9.26}  & \color{c}7.68   \\[0.5ex]
                                & $K\cdot 10^{2}$ & 15.25 &  \color{orange}{9.17} & \color{c}7.53 & 15.47 &\color{orange}{9.27} & \color{c}7.19 
                               & 15.24 & \color{orange}{9.08} & \color{c}7.52 
                               & 15.44 & \color{orange}{9.11} & \color{c}7.28 & 15.64 & \color{orange}{9.17} & \color{c}7.37   \\ [0.5ex] \hline
 \multirow{3}{*}{$\lambda\cdot 10^3$}  
                             & $K\cdot 10^{-2}$ & 14.87 &  \color{orange}{7.80} & \color{c}5.45 & 15.00 &  \color{orange}{7.74} & \color{c}5.68 
                             & 14.95 &  \color{orange}{7.56} & \color{c}5.93 
                             & 15.16 &  \color{orange}{8.05} & \color{c}5.48 & 15.31 &  \color{orange}{8.64} & \color{c}6.10  \\ [0.5ex]
                                     & $K$  & 14.71 &  \color{orange}{7.78}  & \color{c}5.38 & 14.81 &  \color{orange}{7.91} & \color{c}5.42 
                                     & 14.74 &  \color{orange}{8.10} & \color{c}5.75  
                                    & 15.05 &  \color{orange}{8.03} & \color{c}6.68  & 15.23 &  \color{orange}{8.07} & \color{c}6.05  \\ [0.5ex]
                                     & $K\cdot 10^{2}$ & 14.92 &  \color{orange}{8.91} & \color{c}6.78  & 14.97 &  \color{orange}{8.92} & \color{c}6.69  
                                     & 14.90 &  \color{orange}{8.80} & \color{c}6.96 
                                     & 14.83 &  \color{orange}{8.64} & \color{c}5.21 & 14.87 &  \color{orange}{9.27} & \color{c}5.28    \\ [0.5ex] \hline
 \multirow{3}{*}{$\lambda\cdot 10^6$}  & $K\cdot 10^{-2}$ & 14.98 &  \color{orange}{8.95} & \color{c}6.14 & 15.02 &  \color{orange}{9.06} & \color{c}7.07  
                                   & 14.81 & \color{orange}{7.67} & \color{c}7.12
                                  & 14.75 &  \color{orange}{7.65} & \color{c}5.90 & 14.96 & \color{orange}{7.53} & \color{c}5.81  \\ [0.5ex]
                                    & $K$  & 14.91 &  \color{orange}{8.89} & \color{c}5.40  & 15.08 & \color{orange}{8.96} & \color{c}5.61 & 15.12 &  \color{orange}{9.03} & \color{c}7.01
                                   & 15.06 &  \color{orange}{9.12} & \color{c}6.33 & 15.19 &  \color{orange}{9.32}  & \color{c}6.20   \\ [0.5ex]
                                   & $K\cdot 10^{2}$ & 14.72  &  \color{orange}{9.27} & \color{c}4.92 & 14.88 &  \color{orange}{8.99}  & \color{c}5.00 
                                   & 15.09 &  \color{orange}{8.96}  & \color{c}5.26
                                  & 15.52 &  \color{orange}{9.19} & \color{c}5.40  & 15.12 &  \color{orange}{9.24} & \color{c}5.54                               
\end{tabular}
}
\end{table}

\subsection{Scaling test} 

Finally, we present a scaling test demonstrating the convergence behaviour of the preconditioned GMRES and 
augmented Uzawa-type algorithms with respect to the number of fluid networks $n$. These methods have been tested for 
$n=1,2,4,8$. 

In order to perform a reasonable comparison, we have assumed that 
all network transfer coefficients are equal to~$0$ irrelevant to the number of networks, 
$\lambda = 10^3$, $R_i^{-1}=10^4$, $\alpha_{p_i}=10^{-4}$, $i=1,\ldots,n$. 
The test setting is similar to those of the previously considered Biot-Barenblatt and four-network models, i.e., 
$\partial \Omega=\bar\Gamma_1 \cup \bar\Gamma_2 \cup \bar\Gamma_3 \cup \bar\Gamma_4$ with
$\Gamma_1$, $\Gamma_2$, $\Gamma_3$, $\Gamma_4$ being the bottom, right, top and left boundaries respectively,
$\boldsymbol u=0$~on~$\Gamma_4$, 
$(\boldsymbol {\sigma} -\sum_{i=1}^n p_i\boldsymbol I)\boldsymbol n=(0,0)^T$ 
on $\Gamma_1\cup\Gamma_2$, $(\boldsymbol {\sigma} -\sum_{i=1}^n p_i\boldsymbol I)\boldsymbol n=(0,-1)^T$ on $\Gamma_3$ 
and $p_i=10$, $i=1,\ldots,n$~on~$\Gamma$. As previously, all the right-hand sides
have been chosen to be zero. 

We have conducted the numerical tests on a mesh with a mesh-size $h=1/32$. In all test cases, the number of 
required iterations to reach a preconditioned residual reduction by a factor $10^8$  
equals $4$. This clearly indicates the robustness of the proposed algorithms with respect to the number of networks as suggested 
by our theoretical findings. 

\section{Concluding Remarks} 

The main contribution of this manuscript is the development of a new augmented Lagrangian Uzawa algorithm for 
three-by-three double saddle point block systems arising in Biot's and multiple network poroelasticity models. 
The proposed method fully decouples the fluid velocity, fluid pressure and solid displacement fields, 
contrary to the fixed-stress iterative scheme, which decouples only the flow from the mechanics problem. 
In this manner the subsystems that need to be solved in every iteration become considerably smaller, especially 
in the models where multiple fluid networks are present.

The presented convergence analysis proves the parameter-robust linear convergence 
of the new algorithm and additionally offers explicit formulas for a proper choice of required stabilization parameters. 
All numerical tests confirm the robustness and efficiency of the new fully decoupled iterative scheme and also its 
superiority in terms of computational work over existing methods.

\bibliographystyle{amsplain}
\bibliography{reference_mpet}

\end{document}